\documentclass[12pt]{article} 
\usepackage{color}
\usepackage{framed}
\usepackage{comment}
\definecolor{shadecolor}{gray}{0.875}
\specialcomment{nota}{\begin{shaded}}{\end{shaded}}
\includecomment{nota}
\usepackage{a4wide}

\usepackage[utf8]{inputenc}
\usepackage{enumerate}
\usepackage{amsmath}
\usepackage{amsthm}
\usepackage{amsfonts}
\usepackage{euscript}
\usepackage{graphicx}
\usepackage[all]{xy}

\theoremstyle{plain}
\newtheorem{theorem}{Theorem}[section]
\newtheorem{lemma}[theorem]{Lemma}

\newtheorem{definition}[theorem]{Definition}
\newtheorem{assumption}[theorem]{Assumption}

\theoremstyle{remark}

\newtheorem{remark}[theorem]{Remark}

\numberwithin{equation}{section}

\DeclareMathSymbol{\R}{\mathalpha}{AMSb}{"52}

\newcommand{\hc} {{\mathcal{H}}}
\newcommand{\hi} {\left(\pi \hc_t\right)^{-1}}
\newcommand{\hif}{\left(\pi \hc_{\widehat{T}}\right)^{-1}}

\newcommand{\oc} {{\EuScript O}}
\newcommand{\fc} {{\widehat{\mathcal{F}}}}
\newcommand{\uc} {{\EuScript U}}

\newcommand{\up}  {{\widehat \upsilon}}
\newcommand{\xa}  {{\widehat{x}_0}}
\newcommand{\xf}  {{\widehat{x}_f}}

\newcommand{\la}  {{\widehat{\ell}_0}}
\newcommand{\lf}  {{\widehat{\ell}_f}}
\newcommand{\tf}  {{\widehat{T}}}

\newcommand{\bs} {\boldsymbol{s}}
\newcommand{\bo} {\boldsymbol{\omega}}
\newcommand{\bsi} {\boldsymbol{\sigma}}

\renewcommand\^{\widehat }

\setlength\parindent{0pt}

\def\wh{\widehat}

\usepackage{pspicture}

\usepackage{pstricks, pst-node, pst-text, pst-3d}

\usepackage{authblk}
\title{A Hamiltonian approach to sufficiency in optimal control with minimal regularity conditions: Part I }
\author[1]{G. Stefani }
\author[2]{ P. Zezza }
\affil[1]{\small Dipartimento di Matematica e Informatica, Via di Santa Marta 3 -  50139 FIRENZE - ITALIA}
\affil[2]{Dipartimento di Scienze per l'Economia e l'Industria, Via delle Pandette 9 - 50137 FIRENZE - ITALIA}
\providecommand{\keywords}[1]{\noindent \textbf{\textit{Keywords.}} #1}

\providecommand{\classification}[1]{\noindent \textbf{\textit{MSC Classification.}} #1}

\begin{document}

\maketitle

\abstract{\noindent In this paper we develop a Hamiltonian approach to sufficient conditions in optimal control problems. We extend the known conditions for $C^2$ maximised Hamiltonians into two directions: on the  one hand we explain the role of a super Hamiltonian (i.e. a Hamiltonian which is greater then or equal to the maximised one) on the other  we develop the theory under some minimal regularity assumptions. The results we present  enclose many known results and they can be used to tackle new problems.}\\
\ \\
\keywords{Sufficient condition, strong local minima, Hamiltonian methods.}\\
\classification{49K15, 49J30}

\section{Introduction}
This paper is the first part of a larger project whose goal is to describe how a Hamiltonian approach can be a key instrument in optimal control problems (OCPs). 

Hamiltonian methods have been used in OCPs to state sufficient conditions ensuring  the strong local optimality of a reference trajectory. The main feature of this approach is that it allows us to compare the costs of different trajectories by lifting them to the cotangent bundle and hence independently of the control values. The seminal idea goes back to 1879 when K. Weierstrass discovered a method which enables one to establish a strong minimum property for solutions of Euler-Lagrange's equations, i.e. for stationary curves. We can summarise Weierstrass's method as a combination of a local convexity assumption on the integrand and of an embedding of the given curve in a suitable field of non-intersecting stationary curves.  The extension to optimal control requires some efforts since one has to deal with the maximised Hamiltonian coming from the Pontryagin Maximum Principle (PMP). The PMP was introduced in 1956 by Lev Pontryagin and it is an extension to OCPs of the Euler-Lagrange's equation for the Calculus of Variations (CV). One of the main difficulty being that the maximised Hamiltonian in OCPs does not possess the same regularity properties as the one in the CV.

The goal of this project is to propose a unified Hamiltonian approach to sufficient optimality conditions for OCPs whose state evolves on a manifold $M.$ These results include some known ones, emhpasizing their common features, and will allow us to tackle new problems.

The leading ideas of the project are the following:

\begin{itemize}

\item To use the symplectic properties of the cotangent bundle to compare the costs of neighbouring admissible trajectories by lifting them to the cotangent bundle. 

\item To define in the cotangent bundle $T^*M$ a suitable Hamiltonian flow $\hc_t$  emanating from a horizontal Lagrangian submanifold $\Lambda.$ This flow is the one of the maximised Hamiltonian when this last is at least $C^2.$ 

\item To estimate the variation of the cost of admissible trajectories by the variation of a function of their final points and, if it is the case, their final times.

\item To obtain a suitable second order approximation (\emph{$2^{nd}$ variation}) in the form of a coordinate-free Linear-Quadratic (LQ) problem and to require its coercivity.

\item To show that $\hc_{t*}$ (the derivative of $\hc_t$ along the reference extremal) is, up to an isomorphism, the linear Hamiltonian flow associated to the LQ problem.

\item To use the coercivity of the  $2^{nd}$  variation  to substitute the manifold described by the transversality conditions, $\Lambda_0,$ by an horizontal one $\Lambda$. This can be obtained  by adding a penalty term which  reduces the problem to  a problem with free initial point and whose  $2^{nd}$  variation is still coercive,  see M. Hestenes \cite{MR0046590}.  This allows us to overcome a difficult point: in optimal control problems, for lack of controllability it may happen that the projection of the flow starting from $\Lambda_0$ at $t=0$ is not locally onto for a non--trivial time interval.
 
\item To deduce for a problem with free initial point and fixed final point that the projection on $M$ of $\hc_{t}$ emanating from $\Lambda$ is locally invertible so that we can go back to the first issue and we can compare the costs of neighbouring admissible trajectories by lifting them to the cotangent bundle.
\item To use again the coercivity of the  $2^{nd}$  variation to complete the proof for the general case. 
\end{itemize}

The project goes back to some initial papers \cite{MR1819736} and \cite{MR1658472} where the use of this kind of approach to OCPs for the case when the maximised Hamiltonian is at least $C^2$ did  begin.

In \cite{MR1819736} the authors studied a Bolza problem in $\R^n$ on a fixed time interval and with general end--points constraints. They assumed that the data are uniformly quasi$-C^2$, see Definition 1 therein, where the \emph{quasi} refers to the fact that the data are time-dependent, moreover, under the strengthened Legendre condition,  assuming that in a neighbourhood of the reference Pontryagin extremal the local maximum of the Hamiltonian coincides with the global maximum 
they  proved that the maximised Hamiltonian is quasi$-C^2.$

This kind of regularity allows us to look for second order conditions which will guarantee that the abstract theory can be applied to obtain strong local optimality.

When the state evolves on manifolds, the first issue is to prove an invariant version of second order conditions; this problem has been addressed in \cite{MR1658472} for a Mayer problem on a fixed time interval. This result can be obtained by pulling back the control problem to a neighbourhood of the initial point $\xa,$ in this way the second order approximation leads naturally to derive a Hamiltonian formulation of the second variation as a linear-quadratic optimal control problem on $T_{\xa}M$ with control functions in $L^2.$ This formulation is coordinate-free and hence invariant.  

In the book \cite{MR2062547}, dedicated to the \emph{geometric} approach to control problems, the smooth case is investigated in a Hamiltonian setting for a Bolza problem with fixed end points; in \cite{MR2306634} the authors present a numerical algorithms to compute the first point where the trajectory ceases to be locally optimal. 

The assumption that the data are uniformly quasi$-C^2$ can be weakened as shown in \cite{MR1654537} by imposing conditions directly on the Hamiltonian flow (Theorem 2.3 therein), these conditions make sense even if the second variation does not exist and, when the second variation exists, they are implied by its coercivity.

Here we develop the first part of the project and we relax the regularity assumptions on the Hamiltonian in such a way that they guarantee the existence and the regularity of the flow so that this technique can be applied to a larger class of different OCPs.
The Hamiltonian we consider can be either the maximised Hamiltonian or a suitable super--Hamiltonian  which is not necessarily equal to the maximised one but which will be needed in the study of problems where the control contains a singular arc.

The regularity assumptions we propose allows us to use the symplectic properties of Hamiltonian flows (Lemma \ref{exact2})  to state abstract sufficient optimality conditions for a Bolza problem (Theorem \ref{main1}) and we show their possible applications to a minimum time problem (Theorem \ref{main2}).

In the final Section \ref{appl} we briefly summarise the optimal control problems where our assumptions are satisfied and this approach has been used.

In a forthcoming paper we will give a more detailed description of this approach and larger set of references, furthermore we will give some suggestions about possible extensions and further investigations.  For different approaches here we quote only  \cite{MR2970901}, \cite{MR1972537}, \cite{MR2081418} and reference therein. 

\subsection{The problem}
We consider the following optimisation  problem
\begin{center}
Minimise
\end{center}
\begin{equation}
J (T,\,\xi,\,\upsilon) = c_0(\xi (0))+c_f(\xi (T))+\int_{0}^{T}\!\! f^0\bigl(\xi (t),\upsilon(t)\bigr)\, dt \label{obj}
\end{equation}
subject to
\begin{subequations}
\begin{align}
\dot \xi(t) = f(\xi(t),\upsilon(t)), \quad &a.e. \  t \ \in [0,\,T]  \label{xi}\\
\xi(t) &\in M  \label{mani} \\
\xi(0) \in N_0,\ \ &\xi(T)\in N_f  \label{const}\\
\upsilon \in L^\infty([0,\,T],&U), \ U \subseteq \R^m. \label{control}
\end{align}
\end{subequations}
The final time $T$ can be fixed or variable, $M$ is a $n$-dimensional connected paracompact smooth manifold \(M\) and the state end points constraints $N_0,\, N_f$ are smooth connected embedded submanifolds of \(M\). We assume that $c_0,\,c_f,\,f^0,\,f$  are defined on open sets and that they are $C^\infty.$\\
  We take smooth time independent data because we are interested in the irregularities arising from the maximisation of the Hamiltonian.

The couple $(\xi,\,\upsilon)$ is called \emph{admissible} if it a solution of \eqref{xi} which satisfies the constraints \eqref{mani}--\eqref{const}--\eqref{control}, in this case we refer to $\xi$ as an \emph{admissible trajectory} and to $\upsilon$ as the \emph{associated control}.

We consider  as a \emph{candidate optimal solution} a given reference admissible trajectory  $\widehat\xi$ which is identified by the triple $(\widehat{T},\,\widehat{\xi},\, \widehat{\upsilon})$  and we study its strong local optimality according to the following Definition \ref{slm}.  
\begin{definition}\label{slm}[Strong local minimiser]\ \\
The reference admissible trajectory $\widehat\xi$ is a strong local minimiser of the above considered problem \eqref{obj}, if there are neighbourhoods  $\uc \subseteq \R \times M$ of the graph of $\widehat{\xi},$ denoted by $\Gamma_{\widehat{\xi}},$  and $\oc\subseteq \R \times M$ of $(\widehat{T},\,\widehat\xi(\widehat{T}))$ such that $\widehat\xi$
is a minimiser among all the admissible trajectories $\xi$  satisfying
\[
\Gamma_\xi \subset \uc, \quad (T,\,\xi(T)) \in \oc
\]
independently of the associated control. 
\end{definition}

When the final time is fixed this definition reduces to the usual definition of strong local minimum which uses the $C^0$ topology to describe a neighbourhood of the admissible trajectories; whereas in the case of variable end time our definition is  local with respect to the graph of $\widehat{\xi},$ and hence local with respect to both the final time and the final point.

This notion has been called {\em time-state--local optimality} in \cite{MR2092518}--\cite{MR2860348}, where it is used also a stronger version of optimality, called {\em state--local optimality}.

\section{Notations and preliminary results}
We recall that for every connected paracompact smooth manifold $M$ (see \cite{MR2954043}) there is a Riemannian structure which induces a distance $d_M$ on $M$ and the metric topology is the same as the original topology. This distance allows us to talk about Lipschitz property for maps on manifolds.

The next definition will be used to describe the main regularity assumption 
which is a strengthening of the usual Caratheodory-type assumption.

\begin{definition}\label{L-C}
Assume that $M,N$ are finite dimensional Riemannian smooth manifolds and $J$ is an open interval in $\R.$ We will say that the map
\(G :  J \times M \rightarrow N\)
is a Lipschitz--Carathéodory  map if it satisfies the following
\begin{enumerate}[i.]
\item
For almost every $t\in J$ the map $x \mapsto G(t,x)$ is locally Lipschitz.
\item
For each $x \in M$ the map $t \mapsto G(t,x)$  is bounded measurable.
\item
For any compact set $K \subseteq M$ there is an essentially bounded measurable function $m$ such that
\[
d_N(G(t,x),G(t,y)) \leq m(t)\, d_M(x,y), \ x,y \in K
\]
\end{enumerate}
\end{definition}
When $G$ is a time-dependent vector field and hence $N=TM$ is the tangent space to $M,$ this hypothesis assures existence, uniqueness and Lipschitz continuity in $(t,z_0)$ of the solutions of the differential equation
\[
\dot \zeta(t) = G(t, \zeta), \quad \zeta(t_0) = z_0, \ t_0 \in  J.
\]
Moreover we will use it in an analogous but simpler way when $N=\R$ to obtain that the Lipschitz continuity of $(t,z) \mapsto \int_{t_0}^t G(s,z)ds.$ 

Let $f$ be a vector field on the manifold $M$ and $\varphi : M \to \mathbb{R}$ be a smooth function. The action of $f$ on $\varphi$ (directional derivative or 
Lie derivative) evaluated at a point $x$ is denoted with one the two expressions
\[
L_f \varphi(x)= \langle d\varphi(x),f(x)\rangle.
\]
For any \(C^2\)-function \(\varphi\) such that 
\(d\varphi(x)=0\)  the second derivative \(D^2\varphi(x)\) is well defined as a bilinear symmetric form on \(T_x M\).
 
Finally we identify any bilinear form \(Q\) on a vector space \(V\) with a linear form \(Q:V \rightarrow V^*\) and we write
\[Q(v,w):=\langle Qv, w \rangle, \quad Q[v]^2:= Q(v,v,).\]
\begin{remark}
We will sometimes omit writing explicitly that equalities between $L^\infty$ functions hold almost everywhere unless we need to emphasise it.\\
\end{remark}
\subsection{Symplectic notations}
For a general introduction to symplectic geometry
and its application to variational problems we refer to \cite{MR2269239}
while for more specific applications to optimal control
we refer to \cite{MR524203}.

Denote by \( \pi : T^*M \rightarrow M\) the cotangent bundle; it is well known that $T^*M$ possesses a canonically defined 
symplectic structure, given by the symplectic form 
$\bsi_{\ell}=d\bs(\ell)$, where $\ell$ denotes an element of $T^*M$ and $\bs$ is the Liouville canonical 1-form $\bs(\ell)=\ell\circ \pi_*$. 

If $q_i$ are local coordinates on the base manifold $M$ and $p_i$ the fibre coordinates then in local coordinates we can write
\[
\bs :=\sum_{{\mathfrak i}=1}^n p_idq_i, \quad \bsi:=\sum_i {d}p_i \wedge {d}q_i,
\]
where $d$ denotes the exterior derivative and $\wedge$ denotes the exterior product.

We recall that the symplectic structure allows us to associate, with each time dependent locally defined Hamiltonian $H_t : T^*M \rightarrow R$  a unique vector field $\overset{\rightarrow}{H_t}$  on $T^*M$
defined by the action
\[
\langle d H_t(\ell), \cdot \rangle = \bsi_{\ell}(\cdot,\overset{\rightarrow }{H_t}(\ell)).
\]

This vector field defines a corresponding Hamiltonian system
\begin{equation}\label{arrow}
\dot \lambda(t) = \, \overset{\ \rightarrow}{H}_t(\lambda(t)), \quad \lambda(0)=\ell,\quad  a.e. \ t\ \in[0,{T}]
\end{equation}
and we will denote  its flow by $\ell \mapsto \hc(t, \ell)$. Note that for any time dependent object we will use the notation
\[
M(t, \ell) = M_t(\ell)=M_\ell(t)
\]
when one of the variables is to be considered as fixed.
\subsection{The Pontryagin maximum principle}
We assume that the candidate optimal solution $\widehat \xi$ is a state   extremal and with this we mean that it satisfies the PMP, see Assumption \ref{PMP}.\\ For simpler notations we set
\[
\widehat{x}_0:=\widehat\xi(0), \quad \widehat{x}_f:= \widehat\xi(\widehat T)
\]
To state the PMP we introduce three Hamiltonians; if $\uc$ is the common open domain of both $f$ and $f^0$ let $\Omega:= \pi^{-1}(\uc) \subseteq T^*M.$\\
We define
\begin{enumerate}[1)]
\item the (control dependent) \emph{pre-Hamiltonian} $F : \Omega \times U  \rightarrow \R$ 
as
\[F: (\ell, u) \mapsto \langle \ell , f(\pi(\ell),u)\rangle
 - p_0\,f^0( \pi(\ell),u),\]
\item the time dependent \emph{reference Hamiltonian}
\(\widehat F : [0,\,\widehat{T}] \times \Omega  \rightarrow \R\)
as
\[\widehat F : (t,\ell) \mapsto F(\ell,\up(t)),\]
\item the \emph{maximised Hamiltonian} $F_{\max} : \Omega  \rightarrow \overline \R$ as
\[
F_{\max}(\ell):=\sup_{u \in U} F(\ell,u).
\]
\end{enumerate}
Let us remark that the maximised Hamiltonian could take the value $+\infty.$\\
All the above Hamiltonians depend on the parameter \(p_0\) which can take the
values \(\{0,1\}\) characterising, respectively, the abnormal and normal case.
\begin{assumption}\label{PMP} [Pontryagin Maximum Principle] There is a non-trivial couple $(p_0,\widehat \lambda),$ where $p_0 \in \{0,1\}$ and $\widehat \lambda: [0,\,\widehat{T}] \rightarrow T^*M$ is a lifting
of $\widehat \xi$ to \(T^*M,\) (i.e. $\pi \circ \widehat \lambda = \widehat \xi$) satisfying the Hamiltonian system
\begin{equation}
\dot \lambda(t) = \, \overset{\rightarrow}{\widehat F_t}(\lambda(t)), \quad  a.e. \ t \in [0,\,\tf],
\label{pnt2}
\end{equation}
the transversality conditions
\begin{align}
\widehat \lambda(0) &= p_0\,dc_0(\widehat{x}_0) \qquad \text{on} \ T_{\widehat{x}_0}N_0 \label{trn1}\\
\widehat\lambda(\widehat{T}) &= -p_0\,dc_f(\widehat{x}_f) \quad \text{on} \ T_{\widehat{x}_f}N_f,  \label{trn2}
\end{align}
and the maximisation property
\begin{equation}
\widehat F_t(\widehat \lambda(t)) = \max_{u \in U} F(\widehat \lambda(t),u) = F_{\max}(\widehat \lambda(t)). \nonumber
\end{equation}
Moreover $\widehat F_t(\widehat \lambda(t)) $ is constant and it is zero when the end time $T$ is variable.
\end{assumption}
$\^\lambda$ is the {\em Pontryagin extremal} associated to the {\em state extremal} $\wh\xi$.
\noindent For simpler notations we set
\[\la:=\widehat \lambda(0),\quad \widehat{\ell}_f:=\widehat \lambda(\widehat{T}).\]
It is not difficult to see that the two functions $p_0\,c_0$ and $p_0\,c_f$ act on \(N_0\) and \(N_f\),
respectively, but they can be extended to an open set in \(M\) in such a way that the transversality conditions (\ref{trn1}) and (\ref{trn2}) hold on the whole tangent space. Namely we denote by $\alpha, \beta : M \rightarrow \R$ two locally defined functions
such that
\begin{align}
\alpha = p_0\,{c_0}\ \text{on} \ N_0, &\quad \la = d\alpha(\xa) \label{alpha}\\
\beta =p_0\,{c_f} \ \text{on} \ N_f, &\quad \lf =- d\beta(\xf). \label{beta}
\end{align}
The transversality conditions can also be expressed as
\begin{eqnarray*}
&\widehat{\ell}_0\in  \Lambda_0:= \{d\alpha(q)+\omega : q \in N_0,\ 
\omega \in T_q^\perp N_0 \}& \label{IM} \\
& \lf \in \Lambda_f:=\{-d\beta(q)+\omega : q \in N_f,\ 
\omega \in T_q^\perp N_f\}& \label{FM}
\end{eqnarray*}
where "\(\ ^\perp\)" means orthogonal with respect to the dual coupling.

In the normal case \(\alpha,\beta\) are cost functions equivalent to the original ones while in the abnormal case they are extensions of the zero function on the constraints.

When \(p_0=0\) all the costs disappear from our conditions and indeed we will study a problem with a zero cost.
Proving that \(\widehat \xi\) is a {\em strict} minimiser will imply that it is isolated among the admissible  trajectories.

\section{A Hamiltonian approach to optimality}
In this section we extend the known results for a $C^2$ maximised  Hamiltonian  in two different directions. We first prove that the Cartan form maintains its symplectic properties under weaker regularity assumptions on the Hamiltonian, Section \ref{cartan}. Afterwards, motivated by the singular case where the maximised Hamiltonian does not possess the required regularity properties, we introduce a super-Hamiltonian which can be used to compare the costs of admissible trajectories by lifting them to the cotangent bundle, Section \ref{super}. Using these results we prove abstract sufficient conditions, Section \ref{abstr}.

\subsection{The Cartan form} \label{cartan}
Let $\alpha : M \rightarrow \R$ be a smooth function and let  \(\Lambda \) be the graph of $d\alpha$ on a contractible open set. $\Lambda$ is a Lagrangian submanifold of \(T^*M\) and it is known that
\[
\bs_{|\Lambda} = d (\alpha \circ \pi).
\]
Let $\oc$ be an open set in $T^*M$ and $J \subseteq \R$ be an open interval, with  $J \supset [0,\,\widehat{T} ],$ and $\oc \supset \Lambda,$  we consider a time dependent Hamiltonian
$$ H : J \times \oc \subseteq \R \times T^*M \rightarrow \R.$$
Associated to this Hamiltonian we consider the Cartan form  on \(J \times \oc\)
\[
\bo = \bs\boldsymbol{-Hdt}.
\]
The following assumption describes a set of \emph{minimal} regularity conditions which the Hamiltonian $H$ has to guarantee to develop our approach. In specific examples the properties of $H$ will be different but they will imply this kind of regularity.

\begin{assumption}\label{hamiass}
Assume that
\begin{itemize}
\item[1.]
The flow 
\[(t,\ell) \in J \times \Lambda \mapsto \hc(t,\ell) \in T^*M \] is well defined and Lipschitz continuous.
\item[2.]
The function
\[
(t,\ell) \in J \times \Lambda \mapsto \langle \hc_t(\ell),\, \pi_* \overset{\rightarrow}{H}_t \circ \hc_t(\ell) \rangle - H_t \circ \hc_t(\ell) \in \R
\]
is Lipschitz Caratheodory (Definition \ref{L-C})
\end{itemize}
\end{assumption}

Thanks to Assumption \ref{hamiass} the Cartan form defines a Lipschitz function  $\theta : J \times \Lambda \rightarrow \R$ which will play a crucial role. Let
\begin{align} \label{theta}
\theta (t,\ell) := (\alpha \circ \pi)(\ell)&+\int_{{\hc_\ell}_{|[0,\,t]}}\bo=\nonumber\\
=(\alpha \circ \pi)(\ell)&+\int_0^t \langle \hc_s(\ell),\, \pi_* \overset{\rightarrow}{H}_s \circ \hc_s(\ell) \rangle - H_s \circ \hc_s(\ell)\rangle ds 
\end{align}
and hence
\begin{equation}\label{dtheta}
\partial_t\theta (t,\,\ell) = \langle \hc_t(\ell),\, \pi_* \overset{\rightarrow}{H}_t \circ \hc_t(\ell) \rangle - H_t \circ \hc_t(\ell)\quad a.e \ t \in J
\end{equation}
Let us now prove that, also in this case, the classical property of the Cartan form holds, that is the form $\hc^*\bo$ is exact on $J \times \Lambda.$ 

We will consider differential 1-forms with possibly $L^\infty$ coefficients; these forms are called (Whitney) flat forms and their  properties are presented in \cite{MR2177410}. The general theory is fully presented in Whitney's monograph \cite{MR0087148}, see also \cite{MR0257325}. We do not use the results in their full extent, what we really need is the chain rule for Lipschitz functions as it can be found in Lemma 4.6.3 in \cite{MR1859913} and a suitable version of the Stokes Theorem; an explicit proof is in  \cite{MR1819736} but the essential elements needed for the proof are contained in the cited books,  \cite{MR0257325} for the general case or \cite{MR2177410} for the Lipschitz one. \\
The following lemma is the first step to prove that the form $\hc^*\bo$ is exact on $J \times \Lambda.$
\begin{lemma}
For every given $t \in J$ the form $\hc_t^*\bs $ is exact on $\Lambda$ and 
\begin{equation}\label{exact1}
\hc_t^*\bs(\ell) = d\, \theta_t(\ell) \quad  a.e.\  \ell \in \Lambda 
\end{equation}
\end{lemma}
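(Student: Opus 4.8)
The plan is to differentiate the $t$-dependent pullback $\hc_t^*\bs$ in time, recognise the derivative as an exact form on $\Lambda$, and then integrate back. Since $\hc_t$ is the flow of the nonautonomous Hamiltonian field $\overset{\rightarrow}{H}_t$ while $\bs$ is time independent, the appropriate tool is the Lie derivative formula (Cartan's magic formula)
\[
\frac{d}{dt}\,\hc_t^*\bs
= \hc_t^*\bigl(\mathcal{L}_{\overset{\rightarrow}{H}_t}\bs\bigr)
= \hc_t^*\bigl(d\,\iota_{\overset{\rightarrow}{H}_t}\bs
+ \iota_{\overset{\rightarrow}{H}_t}\,d\bs\bigr),
\]
which I expect to hold for almost every $t\in J$.

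I would evaluate the two interior products directly from the definitions. Since $\bs_\ell=\ell\circ\pi_*$, contraction with the field gives $\iota_{\overset{\rightarrow}{H}_t}\bs(\ell)=\langle\ell,\pi_*\overset{\rightarrow}{H}_t(\ell)\rangle$, which is exactly the first summand in the integrand of \eqref{theta}. For the second term, $d\bs=\bsi$, and the defining relation $\langle dH_t(\ell),\cdot\rangle=\bsi_\ell(\cdot,\overset{\rightarrow}{H}_t(\ell))$ yields $\iota_{\overset{\rightarrow}{H}_t}\bsi=-dH_t$. Summing and using that $d$ commutes with pullback,
\[
\frac{d}{dt}\,\hc_t^*\bs
= d\,\hc_t^*\bigl(\langle\ell,\pi_*\overset{\rightarrow}{H}_t(\ell)\rangle-H_t\bigr)
= d\,\partial_t\theta_t,
\]
the last step being precisely \eqref{dtheta}.

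It then remains to integrate over $[0,t]$. Although the identity for $\tfrac{d}{dt}\hc_t^*\bs$ holds only for a.e.\ $t$, the Lipschitz (hence absolutely continuous) dependence on $t$ lets me recover, for \emph{every} $t\in J$, the relation $\hc_t^*\bs-\hc_0^*\bs=d(\theta_t-\theta_0)$ after interchanging the exterior derivative on $\Lambda$ with the time integral. Two normalisations then close the argument: by \eqref{arrow} the flow starts at the identity, so $\hc_0^*\bs=\bs_{|\Lambda}$; and since $\Lambda$ is the graph of $d\alpha$ one has $\bs_{|\Lambda}=d(\alpha\circ\pi)=d\theta_0$, because \eqref{theta} gives $\theta_0=\theta(0,\cdot)=\alpha\circ\pi$. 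Substituting cancels the $d\theta_0$ contributions and leaves $\hc_t^*\bs=d\theta_t$.

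The main obstacle is that none of these steps is automatic under Assumption \ref{hamiass}: the flow is only Lipschitz, so $\overset{\rightarrow}{H}_t\circ\hc_t$ and the pulled-back forms carry merely $L^\infty$ coefficients, and both Cartan's formula and the interchange of $d$ with $\partial_t$ (and with the time integral) must be re-established for flat (Whitney) forms rather than smooth ones. The tools that make this rigorous are the chain rule for Lipschitz maps and the flat version of Stokes' theorem recalled before the statement; these guarantee that $\tfrac{d}{dt}\hc_t^*\bs$ exists for a.e.\ $t$, coincides with the flat form $d\,\partial_t\theta_t$, and may be integrated termwise. Since the objects $\hc_t^*\bs$ and $d\theta_t$ are differentials of Lipschitz functions and hence defined only off a null set of $\ell$, this is exactly why the conclusion is asserted for almost every $\ell\in\Lambda$.
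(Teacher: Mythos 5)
Your strategy is the classical smooth-case proof: differentiate $\hc_t^*\bs$ in time via Cartan's magic formula, recognise the derivative as exact, and integrate back. Under Assumption \ref{hamiass} this has two gaps that the tools you invoke (the Lipschitz chain rule and flat Stokes theorem) do not by themselves fill, and they are precisely why the paper argues differently. First, the identity $\frac{d}{dt}\hc_t^*\bs=\hc_t^*\bigl(\mathcal{L}_{\overset{\rightarrow}{H}_t}\bs\bigr)$ is not available here: its proof requires differentiating $\hc_{t*}$ with respect to $t$ (equivalently, a derivative of $\overset{\rightarrow}{H}_t$ in $\ell$), and under Assumption \ref{hamiass} the vector field $\overset{\rightarrow}{H}_t$ is not assumed differentiable, nor even Lipschitz, in $\ell$; only the flow is Lipschitz, and only the scalar combination in item 2 of Assumption \ref{hamiass} is Lipschitz--Carath\'eodory. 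For the same reason the summand $d\bigl(\iota_{\overset{\rightarrow}{H}_t}\bs\bigr)$ does not even make sense as a flat form away from the flow. Second, even granting the a.e.-in-$t$ formula, your ``integrate back'' step needs $t\mapsto(\hc_t^*\bs)_\ell$ to be absolutely continuous at fixed $\ell$, and this fails for general Lipschitz flows: $(\hc_t^*\bs)_\ell$ involves the spatial derivative $\hc_{t*}$, and the spatial derivative of a Lipschitz map need not be absolutely continuous (nor defined for every $t$) in the remaining variable --- compare $f(t,x)=|x-t|$, whose $x$-derivative is a step function of $t$, so the fundamental theorem of calculus cannot be applied at fixed $x$. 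Saying that Cartan's formula and the interchange of $d$ with $\partial_t$ ``must be re-established for flat forms'' and that the cited tools ``guarantee'' this is naming the difficulty, not resolving it.

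The way these obstructions are actually overcome is to work in integrated form from the start, which is what the paper does: it fixes a Lipschitz curve $\gamma:[0,1]\to\Lambda$, applies the flat Stokes theorem to the Lipschitz map $\phi(\tau,s)=\hc(\tau,\gamma(s))$ on the rectangle $\Delta=[0,t]\times[0,1]$, and computes $\int_\Delta\phi^*\bsi$ twice: once as the four boundary integrals of $\bs$, and once directly using $\partial_\tau\phi=\overset{\rightarrow}{H}_\tau\circ\phi$ together with the defining relation $\langle dH_\tau,\cdot\rangle=\bsi(\cdot,\overset{\rightarrow}{H}_\tau)$, which reduces the double integral to $\int_0^t\bigl\{H_\tau\circ\hc_{\gamma(0)}-H_\tau\circ\hc_{\gamma(1)}\bigr\}d\tau$ by the Lipschitz chain rule in $s$. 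Equating the two expressions yields $\int_\gamma\hc_t^*\bs=\theta_t(\gamma(1))-\theta_t(\gamma(0))$ for every such $\gamma$, which is \eqref{exact1}. This argument uses only first derivatives of the Lipschitz map $\phi$, which exist a.e.\ on $\Delta$; no time-differentiation of the pulled-back form and no derivative of $\overset{\rightarrow}{H}_t$ ever appears. Your computation that $\mathcal{L}_{\overset{\rightarrow}{H}_t}\bs$ would formally equal $d\bigl(\iota_{\overset{\rightarrow}{H}_t}\bs-H_t\bigr)$ correctly explains why the integrand of \eqref{theta} is the right one, but as a proof it stands only in the smooth category; under the paper's minimal hypotheses the two-dimensional Stokes argument is not an optional rigorisation of your plan --- it is the proof.
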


\begin{proof}
It is equivalent to prove that for every Lipschitz curve  $\gamma :[0,\, 1] \rightarrow \Lambda$ 
\[
\int_\gamma \hc_t^*\bs = \int_\gamma d\, \theta_t =\theta_t(\gamma(1)) -\theta_t(\gamma(0)).
\]
Let us consider the set
\[\Delta:=\Bigl\{(\tau,s) :
0\leq\tau \leq t ,\, 0 \leq s  \leq 1\Bigr\}
\]
and the map \[\phi : (\tau,s) \mapsto
\hc(\tau,\gamma(s)).\]
by the Stokes theorem we have
\begin{align*}
\int_\Delta \phi^* \bsi =&
\int_{\phi_{|s=0}}\!\!\!\bs + \int_{\phi_{|\tau=t}}\!\!\!\bs - \int_{\phi_{|s=1}}\!\!\!\bs - \int_{\phi_{|\tau=0}}\!\!\!\bs =\\
 = & \int_{{\hc_{\gamma(0)}}_{|[0,\,t]}}\!\!\!\bs + \int_{\hc_{t}\circ \gamma_{|[0,\,1]}} \!\!\!\bs - 
\int_{{\hc_{\gamma(1)}}_{|[0,\,t]}}\!\!\!\bs - \int_{\gamma_{|[0,\,1]}}\bs 
\end{align*}
moreover
\begin{align*}
\int_\Delta \phi^* \bsi =&
\int_0^t \Bigl\{\int_0^1 \bsi \left(\overset{\ \rightarrow}{H}_\tau \circ \hc_\tau \circ \gamma(s), \partial_s\,\phi(\tau,\,s) \right)\,ds \Bigr\} d\tau= \\
=&
-\int_0^t \Bigl\{\int_0^1 \langle dH_\tau\circ \phi(\tau,\,s),\, \partial_s\,\phi(\tau,\,s) \rangle \,ds \Bigr\} d\tau =\\
=& -\int_0^t H_\tau\circ \hc_{\gamma(1)}\, d\tau + \int_0 ^t H_\tau\circ\hc_{\gamma(0)}\,d \tau
\end{align*}
by equating the two right hand sides we obtain
\begin{align*}
\int_{\hc_{t}\circ \gamma_{|[0,\,1]}} \bs&=
 \int_{\gamma_{|[0,\,1]}}\bs +\int_{{\hc_{\gamma(1)}}_{|[0,\,t]}}\!\!\!\!\!\bo
-\int_{{\hc_{\gamma(0)}}_{|[0,\,t]}}\!\!\!\!\!\bo=\\
 &= \theta(t,\,\gamma(1)) -\theta(t,\,\gamma(0)).
\end{align*}

\end{proof}
We are now able to prove the main result of this section
\begin{lemma}\label{exact2}
The differential form $\hc^*\bo $ is exact on \(J\times \Lambda\) and
\begin{equation*}
\hc^*\bo = d\, \theta \quad \ a.e.\ (t,\ell) \in J \times \Lambda
\end{equation*}
\end{lemma}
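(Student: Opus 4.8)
The plan is to leverage the previous lemma, which already establishes the ``spatial'' part of the exactness, namely $\hc_t^*\bs(\ell) = d\,\theta_t(\ell)$ for each fixed $t$, together with the identity \eqref{dtheta} which controls the $dt$-component. The strategy is to decompose both sides of the claimed equality $\hc^*\bo = d\,\theta$ on $J \times \Lambda$ into their $\Lambda$-directional part and their $t$-directional part, and to verify that they match componentwise.

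First I would write out the Cartan form pulled back by the flow. Since $\bo = \bs - H\,dt$, we have $\hc^*\bo = \hc^*\bs - (H \circ \hc)\,dt$. Here $\hc^*\bs$ is a flat $1$-form on $J \times \Lambda$ whose restriction to each slice $\{t\}\times\Lambda$ is precisely $\hc_t^*\bs$, so by the previous lemma this restriction equals $d\theta_t = d_\Lambda \theta$, the exterior derivative taken in the $\Lambda$-variables only. To pin down the full form I must also identify the coefficient of $dt$ in $\hc^*\bs$. That coefficient is obtained by evaluating $\bs$ on the $\partial_t$-velocity of the flow, i.e. on $\overset{\rightarrow}{H}_t \circ \hc_t(\ell)$; by the definition $\bs(v) = \langle \hc_t(\ell), \pi_* v\rangle$ of the Liouville form, this contraction gives $\langle \hc_t(\ell),\, \pi_* \overset{\rightarrow}{H}_t \circ \hc_t(\ell)\rangle$. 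Subtracting the $(H\circ\hc)\,dt$ contribution then produces exactly the integrand appearing in \eqref{theta}, which by \eqref{dtheta} is $\partial_t \theta$.

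Putting these two computations together, $\hc^*\bo = d_\Lambda \theta + (\partial_t \theta)\,dt$, and the right-hand side is by definition the full exterior derivative $d\theta$ on $J \times \Lambda$. Thus the two forms agree almost everywhere, which is the assertion. I would organise this as: (1) split $\hc^*\bo$ into its spatial and temporal components; (2) invoke the previous lemma for the spatial part; (3) compute the $dt$-coefficient via the definition of $\bs$ and reconcile it with \eqref{dtheta}; (4) recombine into $d\theta$.

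The main obstacle is not the formal algebra but the regularity bookkeeping, since $\theta$ is only Lipschitz and the coefficients are merely $L^\infty$. I expect the delicate point to be justifying that the exterior derivative of $\theta$ genuinely decomposes as $d_\Lambda\theta + (\partial_t\theta)\,dt$ in the flat-form sense almost everywhere, and that contracting $\hc^*\bs$ against $\partial_t$ to extract the $dt$-coefficient is legitimate at this level of regularity. This is exactly where Assumption \ref{hamiass} enters: part 1 guarantees $\hc$ is Lipschitz so that $\hc^*\bo$ is a well-defined flat form, and part 2 guarantees that the $dt$-coefficient is Lipschitz--Carathéodory, so that $\theta$ is genuinely Lipschitz and \eqref{dtheta} holds almost everywhere. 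I would therefore appeal to the chain rule for Lipschitz functions (Lemma 4.6.3 in \cite{MR1859913}) and to the flat-form calculus of \cite{MR2177410} to make the componentwise identification rigorous, rather than attempting a pointwise argument.
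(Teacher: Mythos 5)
Your proposal is correct and takes essentially the same route as the paper: the same decomposition into the $\Lambda$-directional part, identified with $d\,\theta_t$ via \eqref{exact1}, and the time/flow-directional part, identified with $\partial_t \theta$ via \eqref{dtheta}, then recombined into $d\,\theta$. The paper carries out the rigorous version of exactly the componentwise identification you defer to flat-form calculus: it tests $\hc^*\bo$ against Lipschitz curves $s \mapsto (s,\gamma(s))$ and applies the Lipschitz chain rule, so that the velocity $\frac{d}{ds}\,\pi \hc_s \circ \gamma(s)$ splits under the integral sign into precisely your two components.
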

\begin{proof}
We can equivalently prove that for every Lipschitz curve $$\mu : t \in [0,\,1] \mapsto (t,\gamma(t)) \in  J\times \Lambda$$ 
we have
\begin{equation*}
\int_\mu \hc^*\bo = \int_\mu d\,\theta = \theta(1,\gamma(1))-\theta(0,\gamma(0)).
\end{equation*}
By definition we have that
\begin{align*}
\int_\mu \hc^*\bo &= \int_\mu \hc^*\bs - \int_0^1 H\left(s, \hc_{s}\circ \gamma(s)\right) ds =\\
=& \int_0^1 \langle \hc_{s}\ \circ \gamma(s),\, \frac{d}{ds} \pi\hc_{s} \circ \gamma(s)\rangle ds- \int_0^1 H\left(s, \hc_{s}\circ \gamma(s)\right) ds=\\
= & \int_0^1 \left\{ \langle \hc_{s}\ \circ \gamma(s),\, \pi_*\overset{\ \rightarrow}{H}_{s}\circ \hc_{s}\ \circ \gamma(s)\rangle -H\left(s, \hc_{s}\circ \gamma(s)\right)\right\}ds +\\
+& \int_0^1 \langle \hc_{s}\ \circ \gamma(s),\, \pi_*\hc_{s*}  \dot \gamma(s)\rangle ds 
\end{align*}
and by \eqref{dtheta} and \eqref{exact1}
\begin{align*}
\int_\mu \hc^*\bo = &  \int_\mu \partial_s \theta (s,\ell)ds + \int_\mu  d\theta_{s}(\ell) =\\
=& \int_\mu d \theta (s,\ell)= \theta(1,\gamma(1))-\theta(0,\gamma(0)).
\end{align*}
\end{proof}

When the state projection of the Hamiltonian flow is invertible we obtain some important properties of the function $\theta$. We recall  that a homeomorphism between metric spaces is said to be  bi-Lipschitz if it is Lipschitz and has a Lipschitz inverse.

\begin{lemma}\label{dtheta2}
Suppose that, for a given $t \in J,$ the function $\pi\hc_t$ is bi-Lipschitz  then on  $\pi \hc_t(\Lambda)$
\begin{enumerate}[(i)]
\item
$
d \left( \theta_t \circ \hi \right) = \hc_t \circ \hi. 
$
\item The function $\alpha_t :=  \theta_t \circ \hi$ is a $C^1$ function.
\item $\Lambda_t := \hc_t(\Lambda)$ is the graph of $d\alpha_t.$
\end{enumerate}
\end{lemma}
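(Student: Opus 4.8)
The plan is to reduce all three statements to the reproducing (tautological) property of the Liouville form $\bs$ combined with the identity $\hc_t^*\bs = d\theta_t$ from \eqref{exact1}, and then to bootstrap the regularity from Lipschitz to $C^1$. Since $\pi\hc_t$ is bi-Lipschitz, its inverse $\hi$ is a bi-Lipschitz homeomorphism of $N:=\pi\hc_t(\Lambda)$ onto $\Lambda$; as $\Lambda$ is ($n$-dimensional and) the graph of $d\alpha$ over an open set, invariance of domain guarantees that $N$ is open in $M$. The key observation is that $F := \hc_t\circ\hi : N \to T^*M$ is a genuine section of the cotangent bundle, since $\pi\circ F = (\pi\hc_t)\circ\hi = \mathrm{id}_N$; thus $F$ is a Lipschitz $1$-form on $N$, and proving (i) amounts to showing $d\alpha_t = F$.

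For (i) I would apply the chain rule for Lipschitz maps (the version quoted from Lemma 4.6.3 of \cite{MR1859913}) to get $d(\theta_t\circ\hi) = \hi^*d\theta_t$ almost everywhere on $N$, then substitute \eqref{exact1} to obtain $\hi^*\hc_t^*\bs = F^*\bs$. The computation closes by the defining property $\langle\bs(\ell),v\rangle = \langle\ell,\pi_* v\rangle$, which for a section forces $\langle F^*\bs(x),v\rangle = \langle F(x),(\pi\circ F)_* v\rangle = \langle F(x),v\rangle$, i.e. $F^*\bs = F$. This yields $d\alpha_t = \hc_t\circ\hi$ a.e. Care is required only in tracking the exceptional null set on which \eqref{exact1}, the differentiability of $\theta_t$, or that of $\hi$ fails: it pulls back to a null set precisely because $\hi$ and $\pi\hc_t$ are Lipschitz and Lipschitz maps send null sets to null sets.

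Statement (ii) is the regularity upgrade. The right-hand side $\hc_t\circ\hi$ is continuous on $N$ --- the flow is Lipschitz continuous by Assumption \ref{hamiass} and $\hi$ is a homeomorphism --- so the Lipschitz function $\alpha_t$ has an a.e.\ gradient admitting a continuous representative; a Lipschitz function whose distributional gradient is continuous is of class $C^1$ with classical differential equal to that representative everywhere. This promotes (i) to the pointwise identity $d\alpha_t = \hc_t\circ\hi$ on all of $N$. Statement (iii) is then immediate: $\hi$ maps $N$ bijectively onto $\Lambda$, so the graph of $d\alpha_t$ is $\{d\alpha_t(x):x\in N\} = \{\hc_t(\hi(x)):x\in N\} = \hc_t(\Lambda) = \Lambda_t$.

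I expect the genuine difficulty to lie not in the symplectic bookkeeping --- the reproducing property of $\bs$ is formal --- but in the analytic passages carried out in the merely Lipschitz category: justifying the chain rule and the null-set transfer in (i), and the a.e.-to-$C^1$ promotion in (ii), without ever assuming smoothness of the flow. This is exactly the situation the minimal-regularity hypotheses of Assumption \ref{hamiass} and the flat-form calculus are designed to handle.
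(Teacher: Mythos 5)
Your proposal is correct and takes essentially the same approach as the paper: both arguments rest on \eqref{exact1}, the chain rule for Lipschitz maps, and the cancellation coming from $\pi\hc_t\circ\hi=\mathrm{id}$ --- your abstract tautological property $F^*\bs=F$ for sections is exactly the identity the paper verifies concretely inside its integrals over Lipschitz curves, which is just the flat-form way of implementing the same a.e.\ pullback computation. Your explicit a.e.-to-$C^1$ upgrade in (ii) and the null-set transfer via bi-Lipschitzness are precisely the details the paper leaves implicit (it declares (ii) ``immediate''), so they are welcome but not a different method.
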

\begin{proof}\ \\
Since  $ \theta_t \circ \hi $ is Lipschitz then by the chain rule we can prove (i) by  proving that their integrals coincide over any Lipschitz curve  $\gamma : [0,\,1] \rightarrow \pi\hc_t(\Lambda).$ \\
By  \eqref{exact1} we have
\begin{align*}
\int_\gamma d\left(\theta_t \circ \hi \right)= \ &\int_0^1 \langle  d\left(\theta_t \circ \hi\right)(\gamma(s)),\, \dot  \gamma(s) \rangle\, ds =\\
= \ & \int_0^1 \langle  d\theta_t \circ \hi\bigl(\gamma(s)\bigr),\, \hi_*\dot  \gamma(s) \rangle\, ds =\\
= \ & \int_{\hi \circ \gamma}d\,\theta_t = \int_{\hi \circ \gamma} \hc_t^*\bs=\\
= \ & \int_0^1 \langle \hc_t \circ \hi\left(\gamma(s)\right),\, \frac{d}{ds}\left(\pi \hc_t \circ \hi \circ \gamma(s)\right)\rangle ds=\\
= \ & \int_0^1 \langle \hc_t \circ \hi \left(\gamma(s) \right),\,\dot \gamma(s) \rangle ds
\end{align*}
which proves statement (i). (ii) Follows immediately from (i). 
To prove (iii) it is sufficient to notice that
\[
\Lambda_t = \hc_t \circ \hi(\pi \hc_t(\Lambda)).
\]
\end{proof}

\subsection{The super Hamiltonian and its properties.} \label{super}
Throughout this section we assume that $\alpha$ satisfies \eqref{alpha} and   that \(\Lambda \) is the graph of $d\alpha$ on a contractible neighbourhood of $\widehat{x}_0.$ Moreover we assume that $H$ satisfies  Assumption \ref{hamiass} and the following Assumption \ref{superh}, this last motivates the name of super-Hamiltonian. We underline that these assumptions concern jointly the super-Hamiltonian and the horizontal Lagrangian manifold $\Lambda.$
\begin{assumption}\label{superh}
The Hamiltonian $H$ satisfies the following
\begin{subequations}
\begin{align}
&H_t \circ \hc_t(\ell) \geq F_{\max}(\hc_t(\ell)), \ \ell \in \Lambda \label{shami1}\\
&H_t \circ \widehat \lambda(t)=\widehat F_t \circ \widehat \lambda(t)  = F_{\max}\circ \widehat \lambda(t), \ a.e.  \ t \in  J\label{shami3}\\
&\overset{\ \rightarrow}{H_t}(\widehat \lambda(t))=\overset{\ \rightarrow}{\widehat F_t}(\widehat \lambda(t)), \ a.e.\  t \in  J. \label{shami4}
\end{align}
\end{subequations}
\end{assumption}
\noindent As a consequence  the Pontryagin extremal \(\widehat \lambda\) is also a solution of the system
\begin{equation}
\dot \lambda(t) = \, \overset{\rightarrow}{H_t}(\lambda(t))\quad  a.e. \ t\ \in[0,\wh{T}]. \label{hami}
\end{equation}
\begin{remark}
If the maximised Hamiltonian $F_{\max}$ satisfies Assumption \ref{hamiass} then it satisfies also Assumption \ref{superh}; for example in the classical case when $F_{\max}$ is $C^2$ we can take $H:=F_{\max}$ for any $\Lambda.$
\end{remark}
From Assumption \ref{hamiass} it follows that  there exists an interval $I:=[0,\,{T}]$ with $ T > \widehat{T}$ such that $\hc_t(\la)$ is defined for $t \in I$ and we have ${\hc_t(\la) = \widehat{\lambda}(t)}$ on $[0,\, \widehat{T}].$ Moreover from the compactness of the time interval $I$, it follows that there is an open neighbourhood \(\oc_\la\) of \(\la\) such that,  without loss of generality, we can redefine $\Lambda := \Lambda \cap \oc_\la,$ to obtain that \(\hc\) is defined on \(I \times \Lambda\) and \(\fc\) is defined on \([0,\,\widehat{T}] \times \Lambda,\) where 
 $\fc$ is the flow of $\overset{\ \rightarrow}{\widehat F_t}.$ 
 
{ To prove the main Theorem we need a final crucial assumption concerning  the invertibility of the projection of the flow of the Hamiltonian onto the state space.}
\begin{assumption}\label{lipo}
Assume that the function
\begin{align*}
id_{I} \times \pi\hc: (t,\ell) \in I \times \Lambda &\mapsto (t, \pi \hc_t(\ell))  \in I \times M
\end{align*}
is bi-Lipschitz  between  $I \times \Lambda$ and an open set $\uc$ of $ I \times M$  containing the graph of $\widehat{\xi}.$ 
\end{assumption}
To verify the bi-Lipschitz assumption one can use one the the available inverse function theorems for locally Lipschitz functions which are based on the local invertibility properties of $\pi_{*}\hc_{t*}$ in $[0,\,\widehat{T}] \times \{\la \}. $ We refer, for example, to the one proposed by F. Clarke in \cite{MR0425047}.
\begin{remark}
For optimal control problems where the initial point is free the problem is always normal, moreover the initial Lagrangian manifold $\Lambda_0$ is itself horizontal and we take it as  $\Lambda.$ In general, however, this is not the case and one has to define an appropriate $\alpha,$ in our approach we expect that the suitable $\alpha$ can be obtained from the second order conditions as it is the case in the applications we describe in the final Section \ref{appl}.
\end{remark}
We can now state the main Theorem which compares the reference cost with the cost of an admissible neighbouring trajectory.
\begin{theorem}\label{main1}
Under Assumption \ref{hamiass}--\ref{superh}--\ref{lipo} , let $\xi : [0,T]  \rightarrow M$ 
be an admissible trajectory whose graph is contained in $\uc$ and let 
$\rho(t) := \left(\pi\hc_t\right)^{-1} \circ \xi(t) $
then
\begin{equation}\label{cost}
p_0\left(J({T},\xi,\upsilon)-J(\widehat{T},\widehat{\xi},\up)\right) \geq \beta(\xi({T})) +\theta({T},\,\rho({T}))- \beta(\xf)-\theta(\widehat{T}, \widehat{\ell}_0)
\end{equation}
\end{theorem}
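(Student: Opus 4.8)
The plan is to compare the two costs by lifting the competitor trajectory $\xi$ to the cotangent bundle via the inverse flow and exploiting the exactness of the Cartan form established in Lemma \ref{exact2}. The key device is the lifted curve $\rho(t) = (\pi\hc_t)^{-1}\circ\xi(t)$, which is well defined and Lipschitz precisely because Assumption \ref{lipo} guarantees that $id_I\times\pi\hc$ is bi-Lipschitz on a neighbourhood $\uc$ containing the graph of $\xi$. By construction $\pi\hc_t(\rho(t)) = \xi(t)$, so $\hc_t(\rho(t))$ is a lift of $\xi$ to $T^*M$ whose projection is the admissible trajectory, and the reference curve corresponds to $\rho(t)\equiv\la$ since $\hc_t(\la) = \widehat\lambda(t)$ on $[0,\widehat T]$.

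First I would write the running cost $\int_0^{T} f^0(\xi,\upsilon)\,dt$ in Hamiltonian form. Using the pre-Hamiltonian $F$ and the definition of $f^0$ through $p_0 f^0(\pi(\ell),u) = \langle\ell,f(\pi(\ell),u)\rangle - F(\ell,u)$, evaluated along $\ell = \hc_t(\rho(t))$ with $u=\upsilon(t)$, one gets
\begin{equation*}
p_0\!\int_0^{T}\!\! f^0(\xi,\upsilon)\,dt = \int_0^{T}\!\!\Big(\langle \hc_t(\rho(t)),\dot\xi(t)\rangle - F(\hc_t(\rho(t)),\upsilon(t))\Big)\,dt,
\end{equation*}
where $\dot\xi(t)=f(\xi(t),\upsilon(t))$ has been used. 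The crucial inequality comes from the super-Hamiltonian bound \eqref{shami1}: since $F(\hc_t(\rho),\upsilon)\le F_{\max}(\hc_t(\rho))\le H_t\circ\hc_t(\rho)$, we can replace $-F$ by $-H_t\circ\hc_t(\rho)$ at the cost of a sign-definite error, yielding
\begin{equation*}
p_0\!\int_0^{T}\!\! f^0\,dt \ge \int_0^{T}\!\!\Big(\langle \hc_t(\rho(t)),\dot\xi(t)\rangle - H_t\circ\hc_t(\rho(t))\Big)\,dt.
\end{equation*}

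Next I would recognise the right-hand integrand as the pullback $\hc^*\bo$ integrated along the curve $t\mapsto(t,\rho(t))$ in $J\times\Lambda$: indeed $\langle\hc_t(\rho),\dot\xi\rangle = \langle\hc_t(\rho),\frac{d}{dt}\pi\hc_t(\rho)\rangle$ is exactly the $\hc^*\bs$ term, and subtracting $H_t\circ\hc_t(\rho)\,dt$ gives $\hc^*\bo$. By Lemma \ref{exact2} this integral equals $\theta(T,\rho(T)) - \theta(0,\rho(0)) = \theta(T,\rho(T)) - \theta(0,\la)$, and since $\rho(0)=\la$ lies on $\Lambda$ with $\theta(0,\la)=\alpha(\xa)$ we identify the endpoint terms. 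The reference cost is handled identically but with equality throughout, because along $\widehat\lambda$ the bounds \eqref{shami3}--\eqref{shami4} hold with equality, giving $p_0 J(\widehat T,\widehat\xi,\up) = \alpha(\xa)+\beta(\xf)+\theta(\widehat T,\la)-\alpha(\xa)$, so after adding the boundary cost terms $p_0 c_0,p_0 c_f$ (expressed through $\alpha,\beta$ via \eqref{alpha}--\eqref{beta}) and subtracting, the $\alpha$-contributions cancel and \eqref{cost} emerges.

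The main obstacle I anticipate is the careful bookkeeping of the boundary cost terms $c_0,c_f$ and their translation into $\alpha,\beta$ under the constraint structure. One must use $\alpha = p_0 c_0$ on $N_0$ and $\beta = p_0 c_f$ on $N_f$ together with the fact that $\xi(0),\xi(T)$ and $\xa,\xf$ all lie on the respective constraint manifolds, so that $p_0 c_0(\xi(0)) = \alpha(\xi(0))$ and the $\alpha$-terms from $\theta$ and from the cost combine correctly; the delicate point is that $\theta(t,\ell)$ already contains an $(\alpha\circ\pi)(\ell)$ summand, so one must verify that the initial-point contributions telescope rather than double-count. A secondary technical care is justifying the integration-by-parts / chain-rule manipulations at the merely Lipschitz regularity level, which is exactly the flat-form Stokes machinery invoked before Lemma \ref{exact2}, so this is available rather than an obstruction.
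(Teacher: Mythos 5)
Your overall strategy coincides with the paper's own proof: lift $\xi$ to $T^*M$ through $\rho(t)=\left(\pi\hc_t\right)^{-1}\circ\xi(t)$, rewrite the running cost via the pre-Hamiltonian identity $p_0 f^0(\xi(t),\upsilon(t))=\langle \hc_t(\rho(t)),\dot\xi(t)\rangle-F(\hc_t(\rho(t)),\upsilon(t))$, bound $F$ by $H_t$ using \eqref{shami1}, recognise the remaining integrand as $\hc^*\bo$ along $t\mapsto(t,\rho(t))$ and apply Lemma \ref{exact2}, treating the reference trajectory with equality via \eqref{shami3}--\eqref{shami4}. All of these steps are sound and are exactly the steps of the paper, merely organised as a direct chain of inequalities rather than the paper's ``$0=\dots$'' identity.

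The genuine flaw is the identification $\rho(0)=\la$ for the competitor trajectory. That holds only for the reference curve: a competitor $\xi$ is merely required to start on $N_0$, so in general $\xi(0)\neq\xa$. Since $\hc_0=\mathrm{id}$, the map $\pi\hc_0$ is just the projection restricted to $\Lambda$, hence $\rho(0)$ is the point of $\Lambda$ above $\xi(0)$, namely $d\alpha(\xi(0))$, and therefore $\theta(0,\rho(0))=\alpha(\xi(0))$, not $\alpha(\xa)$. With your value, the lower bound for the competitor cost reads $\alpha(\xi(0))+\beta(\xi(T))+\theta(T,\rho(T))-\alpha(\xa)$, and after subtracting the reference cost $\beta(\xf)+\theta(\widehat{T},\la)$ you are left with the spurious term $\alpha(\xi(0))-\alpha(\xa)$, which has no definite sign; so \eqref{cost} does not emerge, and the cancellation of ``$\alpha$-contributions'' you invoke fails precisely because of this misidentification. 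The repair is immediate and is what the paper does: with the correct evaluation $\theta(0,\rho(0))=\alpha(\xi(0))$, this term cancels against the initial cost $p_0 c_0(\xi(0))=\alpha(\xi(0))$ (using $\xi(0)\in N_0$ and \eqref{alpha}) --- in the paper's proof this is the term $\alpha(\pi(\mu(0)))$ --- and the inequality \eqref{cost} then follows exactly as you intended. You in fact flagged this ``double-counting'' of the $\alpha\circ\pi$ summand of $\theta$ as the delicate point, but resolved it with the wrong value of $\rho(0)$.
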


\begin{proof}
Let $\mu := \hc_t \circ \rho$ be the lift of $\xi$ to $T^*M,$ by Lemma \ref{exact2} we obtain
\begin{align*}
0 &= \int _0^{T}\left\{\langle \mu(t),\, \dot \xi(t) \rangle - H_t \circ \mu(t)\right\} dt+\theta(\widehat{T}, \widehat{\ell}_0) -\theta(T,\,\rho(T))+\\
& \quad - \int _0^{\widehat{T}}\left\{\langle \widehat{\lambda}(t),\,  \widehat{f}_t \circ \widehat{\xi}(t)\rangle - \widehat{F}_t \circ \widehat{\lambda}(t)\right\} dt + \alpha (\pi(\mu(0))) -\alpha (\pi(\widehat{\ell}_0))=\\
&=\int _0^{T}\left\{\langle \mu(t),\, \dot \xi(t) \rangle - H_t \circ \mu(t)\right\} dt+\theta(\widehat{T}, \widehat{\ell}_0) -\theta({T},\,\rho({T}))+\\
&\quad  +\int_0^{T} p_0\,f^0(\xi(t), \upsilon(t))dt-\int_0^{T} p_0\,f^0(\xi(t), \upsilon(t))dt +\\
&\quad - \int _0^{\widehat{T}}\left\{\langle \widehat{\lambda}(t),\,  \widehat{f}_t \circ \widehat{\xi}(t)\rangle - \widehat{F}_t \circ \widehat{\lambda}(t)\right\} dt + \alpha (\xi(0)) -\alpha (\xa)+\\
& \quad +\int_0^{\widehat{T}} p_0\,f^0(\widehat{\xi}(t), \up(t))dt-\int_0^{\widehat{T}} p_0\,f^0(\widehat{\xi}(t), \up(t))dt.
\end{align*}
If we isolate, on the left hand side, the terms which describe the costs of the two trajectories we obtain
\begin{align*}
\alpha (\xi(0)) &+\int_0^{T} p_0f^0(\xi(t), \upsilon(t))dt -\alpha (\widehat{\xi}(0))-\int_0^{\widehat{T}} p_0f^0(\widehat{\xi}(t), \up(t))dt =\\
 & =-\int _0^{T}\left\{\langle \mu(t),\, f(\xi(t), \upsilon(t))\rangle -p_0\,f^0(\xi(t), \upsilon(t))- H_t \circ \mu(t)\right\} dt+\\
&\quad +\int _0^{\widehat{T}}\left\{\langle \widehat{\lambda}(t),\, {f} (\widehat{\xi}(t), \up(t))\rangle -p_0f^0(\widehat{\xi}(t), \up(t)) - \widehat{F}_t \circ \widehat{\lambda}(t)\right\} dt+\\
&\quad -\theta(\widehat{T}, \widehat{\ell}_0) +\theta({T},\,\rho({T}))=\\
&=-\int _0^{T}\left\{F( \mu(t),\, \upsilon(t))- H_t \circ \mu(t)\right\} dt-\theta(\widehat{T}, \widehat{\ell}_0) +\theta({T},\,\rho({T}))
\end{align*}
where we have used the definition of $F$ and of $\widehat{F}.$ 
Now by the \eqref{shami1} property of the super Hamiltonian, by adding to both sides $\beta(\xi({T}))-\beta(\xf),$ and recalling that on the initial manifold $\alpha =p_0\,c_0$ and on the final manifold $\beta = p_0\,c_f$  we obtain
\begin{equation*}
p_0\left(J({T},\xi,\upsilon)-J(\widehat{T},\widehat{\xi},\up)\right) \geq \beta(\xi({T})) +\theta({T},\,\rho({T}))- \beta(\xf)-\theta(\widehat{T}, \widehat{\ell}_0)
\end{equation*}
\end{proof}

\subsection{Abstract sufficient optimality conditions.} \label{abstr}
We are now able to state an abstract theorem which reduces the strong local optimality of an admissible reference trajectory $\widehat{\xi}$ to the local optimality of a suitable function of the right end point $(\widehat{T},\,\xf)$ of the graph of $\widehat{\xi}.$ \\
Let  \[
\Phi \colon (t,\,x)\in \uc \mapsto \beta(x)+ \theta(t,\,\hi(x)) = (\beta + \alpha_t)(x) \in  \R, 
\]
where $\alpha_t =\hc_t \circ \hi$ was defined in part (ii) of Lemma \ref{dtheta2}.
By means of this function we can rewrite the inequality \eqref{cost} as 
\begin{equation}\label{cost2}
p_0\left(J({T},\xi,\upsilon)-J(\widehat{T},\widehat{\xi},\up)\right)\ \geq \ \Phi({T}, \xi({T})) -\Phi(\widehat{T},\,\xf).
\end{equation}
This relation will allows us to characterise the minima of the function $p_0J$ by studying the function $\Phi$ at the reference point $(\widehat{T},\,\xf)$. Indeed we can now state a sufficient optimality condition
\begin{theorem}\label{suff}
In the normal case, $p_0=1$, under Assumptions \ref{hamiass}--\ref{superh}--\ref{lipo}, if there exists  a neighbourhood $\oc$ of ${(\widehat{T},\,\xf)}$ in $N_f$  
such that $(\widehat{T},\xf)$ is a minimiser of $\Phi$ restricted to  $\oc$  then $(\widehat{T},\,\widehat{\xi},\up)$ is a strong local minimiser.
\end{theorem}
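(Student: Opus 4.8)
The plan is to let essentially all of the analytic work be done by Theorem \ref{main1} and to treat the present statement as a short deduction from the master inequality \eqref{cost2}. First I would set $p_0=1$, which is the standing hypothesis, so that \eqref{cost2} reads
\[
J({T},\xi,\upsilon)-J(\widehat{T},\widehat{\xi},\up)\ \geq\ \Phi({T},\xi({T}))-\Phi(\widehat{T},\xf)
\]
for every admissible trajectory $\xi$ on $[0,T]$ whose graph is contained in the set $\uc$ furnished by Assumption \ref{lipo}. Note that this inequality already presupposes Assumptions \ref{hamiass}--\ref{superh}--\ref{lipo}, so the only remaining task is to convert the hypothesis ``$(\widehat{T},\xf)$ minimises $\Phi$ on a neighbourhood in $N_f$'' into the nonnegativity of the right-hand side, and then to package the conclusion in the exact form demanded by Definition \ref{slm}.

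The neighbourhood that plays the role of $\uc$ in Definition \ref{slm} is precisely the open set $\uc$ of Assumption \ref{lipo}, since it contains the graph $\Gamma_{\widehat{\xi}}$ and, by the bi-Lipschitz property, it is exactly where $\rho(t)=\hi\circ\xi(t)$ and hence $\Phi(t,\xi(t))=\beta(\xi(t))+\theta(t,\hi(\xi(t)))$ are well defined. For the second neighbourhood I would address the mismatch between the given $\oc\subseteq\mathbb R\times N_f$ and the set $\oc\subseteq\mathbb R\times M$ required by the definition: because $N_f$ is an embedded submanifold of $M$, there is an open set $\widetilde\oc\subseteq\mathbb R\times M$ with $(\widehat{T},\xf)\in\widetilde\oc$ and $\widetilde\oc\cap(\mathbb R\times N_f)\subseteq\oc$, and after shrinking I may also assume $\widetilde\oc\subseteq\uc$ so that $\Phi$ is defined throughout. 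This $\widetilde\oc$ is the neighbourhood of $(\widehat{T},\widehat\xi(\widehat{T}))$ in Definition \ref{slm}.

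With these choices the conclusion follows directly. Given an admissible $\xi$ with $\Gamma_\xi\subset\uc$ and $(T,\xi(T))\in\widetilde\oc$, admissibility forces $\xi(T)\in N_f$, so $(T,\xi(T))\in\widetilde\oc\cap(\mathbb R\times N_f)\subseteq\oc$; the assumed minimality of $\Phi$ at $(\widehat{T},\xf)$ over $\oc$ then gives $\Phi(T,\xi(T))\geq\Phi(\widehat{T},\xf)$, and combining this with the displayed inequality yields $J(T,\xi,\upsilon)\geq J(\widehat{T},\widehat\xi,\up)$. I would also record the identity $\hif(\xf)=\la$, coming from $\hc_{\widehat T}(\la)=\lf$ and $\pi(\lf)=\xf$, to confirm that $\Phi(\widehat{T},\xf)=\beta(\xf)+\theta(\widehat{T},\la)$ matches the reference term in \eqref{cost}. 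Since the heavy lifting (the symplectic exactness of Lemma \ref{exact2}, the super-Hamiltonian bound, and the invertibility of $\pi\hc_t$) is already absorbed into Theorem \ref{main1}, the only genuinely delicate point here is the topological bookkeeping of the previous paragraph: correctly extending the relative $N_f$-neighbourhood to an ambient one while preserving the implication ``endpoint in $\widetilde\oc$ and on $N_f$ $\Rightarrow$ endpoint in $\oc$'', and verifying that every object entering $\Phi$ remains defined on $\uc$. I expect this matching of neighbourhoods, rather than any estimate, to be the main obstacle.
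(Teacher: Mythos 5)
Your proposal is correct and follows exactly the paper's route: the authors' entire proof is ``It follows immediately from \eqref{cost2},'' and your argument is just that deduction with the routine details (choice of $\uc$ from Assumption \ref{lipo}, extension of the relative neighbourhood in $\R\times N_f$ to an ambient one via embeddedness of $N_f$, and the identity $\hif(\xf)=\la$) written out explicitly.
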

\begin{proof}
It follows immediately from \eqref{cost2}.
\end{proof} 
\begin{remark}
When the initial point is free we have that $ \alpha = c_0$, if, moreover, the final point and time are fixed then the right hand side of \eqref{cost2} is zero and we can prove that $\widehat{\xi}$ is a strong local minimiser by only verifying Assumptions \ref{hamiass}--\ref{superh}--\ref{lipo}
for the given $\Lambda$. In the other cases we have to find a suitable $\Lambda$ and to prove the minimality property of $\Phi$ at $(\widehat{T},\xf).$
\end{remark}

\begin{remark}
Concerning the abnormal case, $p_0=0,$ if one can prove that the minimum is strict then, as a byproduct, it follows that $\widehat{\xi}$ is isolated among the admissible trajectories. To prove that  $\widehat{\xi}$  is a strict strong local minimiser the first step is to require that  ${(\widehat{T},\,\xf)}$ is a strict local minimiser for $\Phi,$ in this case, if we have another minimiser $(T, \xi, \upsilon),$ we can conclude that $T=\widehat{T}$ and $ \xi(T)=\xf.$ From the proof of Theorem \ref{main1} we have
\begin{equation*}
0=p_0\left(J({T},\xi,\upsilon)-J(\widehat{T},\widehat{\xi},\up)\right) =-\int _0^{\widehat{T}}\left\{F( \mu(t),\, \upsilon(t))- H_t \circ \mu(t)\right\} dt.
\end{equation*}
By Assumption \eqref{shami1} we obtain that
\begin{equation*}
F( \mu(t),\, \upsilon(t))- H_t \circ \mu(t)=0, \quad a.e. \ t \in [0,\,\tf].
\end{equation*}
On the other hand, since $\mu = \hc_t \circ \hi \circ \xi$ we have that 
\begin{equation*}
\dot \mu(t) = \overset{\rightarrow} H_t(\mu(t))- \hc_{t*}\hi_{*}\left(\pi_* \overset{\rightarrow}H_t (\mu(t))-f(\xi(t), \mu(t))\right)\  a.e. \ t\ \in[0,\tf].
\end{equation*}
If 
\begin{equation}\label{zero}
\pi_* \overset{\rightarrow}H_t (\mu(t))-f(\xi(t), \mu(t))=0{\quad  a.e. \ t\ \in[0,\tf]}
\end{equation}
then one can conclude that $\mu = \widehat{\lambda}$ since they both satisfy the same Hamiltonian equation with the same boundary conditions. Unfortunately Assumption \eqref{shami1} is too mild to prove \eqref{zero}. 
We can strengthen it by assuming that
\begin{equation}
H_t (\ell) \geq F_{\max}(\ell), \ \ell \in \oc 
\end{equation}
where $\oc$ is a neighbourhood of the range of $\widehat{\lambda}; $ we note that this is true if $H_t=F_{\max}.$ From this new assumption we deduce that
\begin{equation}\label{zero2}
F( \ell, \upsilon)- H_t(\ell)=0, \quad  \forall \ \ell \ \colon \ \pi \ell = \xi(t), {\quad a.e. \ t \in [0,\,\tf]},
\end{equation}
hence by  \eqref{zero2}  
\begin{equation}
0= \partial_v F( \mu(t), \upsilon)-\partial_v H_t(\mu(t)) = \pi_* \overset{\rightarrow}H_t (\mu(t))-f(\xi(t), \mu(t)),
\end{equation}
where $\partial_v$ is the vertical derivative along the fibre.
This is not the only way to obtain local uniqueness of the strong minimiser; for the case of singular control see for example \cite{MR2860348}, \cite{ChiSte2015}.
\end{remark}
To state necessary and/or sufficient condition for $(\tf,\xf)$ to be a local minimiser for  $\Phi$  we compute its first and second derivatives. We note that, by Lemma \ref{dtheta2}, the function $\Phi$ is a $C^1$ function of $x$ at $t$ fixed and  Lemma \ref{phi} states that the point $(\widehat{T},\,\xf)$ is a critical point for the function $\Phi$ without further assumptions on the data. On the other hand to  obtain the existence of second derivatives  we will require stronger regularity assumptions on the data which here we do not specify. We underline that since the first derivatives are zero, then the second ones are well defined as a quadratic function on $\R\times T_{\xf}M$
\begin{lemma}\label{phi}
Under Assumptions \ref{hamiass}--\ref{superh}--\ref{lipo}, we have that
\begin{enumerate}[(i)]
\item $\displaystyle{\partial_x \Phi(t,x) = d \beta(x) + \hc_t \circ \hi(x)}= d(\beta+\alpha_t)(x), \ t \in I$ 
\item $\partial_t \Phi(t,x) =  - H_t\circ \hc_t \circ \hi (x) =  - H_t\circ (d\alpha_t) (x), \ a.e. \ t \in I.$
\end{enumerate}
Moreover
\[
d\,\Phi(\widehat{T},\,\xf)=0.
\]
\end{lemma}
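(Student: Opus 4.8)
The plan is to compute both partial derivatives simultaneously by pulling back the Cartan form through the map that reconstructs, over each base point, the covector carried by the flow. Statement (i) is essentially immediate: since $\Phi(t,x)=\beta(x)+\alpha_t(x)$ with $\alpha_t=\theta_t\circ\hi$, and Lemma \ref{dtheta2}(i) gives $d\alpha_t=\hc_t\circ\hi$, differentiating in $x$ at fixed $t$ yields $\partial_x\Phi(t,x)=d\beta(x)+\hc_t\circ\hi(x)=d(\beta+\alpha_t)(x)$.

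The substance lies in (ii), where the difficulty is that $\hi=\left(\pi\hc_t\right)^{-1}$ itself depends on $t$, so a direct time differentiation of $\theta(t,\hi(x))$ would produce an extra term coming from the variation of $\hi$, and one would then have to show it recombines with $\partial_t\theta$. To avoid this bookkeeping I would instead invoke Lemma \ref{exact2}, which gives $\hc^*\bo=d\theta$ on $J\times\Lambda$, and pull it back along $\Psi\colon(t,x)\mapsto(t,\hi(x))$. Setting $\Theta(t,x):=\hc_t\circ\hi(x)$, this reads
\[
d\Phi=d\beta+\Psi^*d\theta=d\beta+(\hc\circ\Psi)^*\bo=d\beta+\Theta^*\bs-(H_t\circ\Theta)\,dt.
\]
The crucial observation is that $\pi\circ\Theta(t,x)=x$ holds identically in $t$, so $\pi_*\,\partial_t\Theta=0$; since the Liouville form is $\bs(\ell)=\langle\ell,\pi_*\,\cdot\,\rangle$, this annihilates the $dt$-component of $\Theta^*\bs$. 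Extracting the $dt$-part of the displayed identity then leaves precisely $\partial_t\Phi(t,x)=-H_t\circ\hc_t\circ\hi(x)$, which equals $-H_t\circ(d\alpha_t)(x)$ by Lemma \ref{dtheta2}. I expect this vanishing of the $dt$-component of $\Theta^*\bs$, justified by $\pi_*\,\partial_t\Theta=0$, to be the only genuinely delicate point.

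It remains to show $d\Phi(\tf,\xf)=0$, which I would verify by evaluating at the reference endpoint. Since $\hc_t(\la)=\widehat\lambda(t)$ on $[0,\tf]$, at $t=\tf$ we have $\hif(\xf)=\la$ and $\hc_{\tf}\circ\hif(\xf)=\lf$. For the spatial part, transversality \eqref{beta} gives $\lf=-d\beta(\xf)$, whence $\partial_x\Phi(\tf,\xf)=d\beta(\xf)+\lf=0$. For the temporal part, which is relevant only when the final time is free, part (ii) yields $\partial_t\Phi(\tf,\xf)=-H_{\tf}(\lf)$, and combining \eqref{shami3} with the normalisation in Assumption \ref{PMP} that $\widehat F_t(\widehat\lambda(t))$ is zero in the variable-time case gives $H_{\tf}(\lf)=\widehat F_{\tf}(\widehat\lambda(\tf))=0$; when the final time is fixed there is no $t$-variation, so $d\Phi$ reduces to its spatial part. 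In either case $d\Phi(\tf,\xf)=0$.
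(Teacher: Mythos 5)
Your proof is correct. Part (i) and the evaluation $d\,\Phi(\tf,\xf)=0$ (the extension \eqref{beta} of the transversality condition for the spatial part, \eqref{shami3} plus the variable-time normalisation in Assumption \ref{PMP} for the temporal part) coincide with the paper's argument, but your part (ii) takes a genuinely different route. The paper differentiates $\theta(t,\hi(x))$ in $t$ directly: the chain rule produces $\partial_t\theta(t,\hi(x))$, given by \eqref{dtheta}, plus a term coming from the $t$-dependence of the inverse $\hi$; by Lemma \ref{dtheta2}(i) that extra term equals $-\langle \hc_t\circ\hi(x),\,\pi_*\overset{\rightarrow}{H}_t\circ\hc_t\circ\hi(x)\rangle$, which cancels the first summand of \eqref{dtheta} and leaves $-H_t\circ\hc_t\circ\hi(x)$. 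You avoid differentiating the $t$-dependent inverse $\hi$ altogether by pulling the exactness identity $\hc^*\bo=d\theta$ of Lemma \ref{exact2} back along your $\Psi=(id_{I}\times\pi\hc)^{-1}$ — legitimate because Assumption \ref{lipo} makes $\Psi$ bi-Lipschitz, so the Lipschitz chain rule applies and a.e.\ identities of flat forms are preserved — and then reading off the $dt$-component, where the Liouville contribution dies because $\pi\circ\Theta(t,x)=x$ is $t$-independent. The two proofs encode the same cancellation, but in yours it is structural rather than computational, and it invokes Lemma \ref{exact2} where the paper needs only \eqref{dtheta} and Lemma \ref{dtheta2}; as a by-product your pullback identity also yields (i) by reading off the $dx$-component, since $\pi_*\Theta_*\,\delta x=\delta x$. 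The one caveat — shared with the paper's own computation, hence not a gap — is measure-theoretic: the functions involved are merely Lipschitz, so both arguments produce the formula for a.e.\ $(t,x)$, and upgrading to the statement ``for each $x$, a.e.\ $t\in I$'' requires a routine Fubini-plus-continuity remark that the paper also leaves implicit.
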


\begin{proof}
(i) follows immediately from Lemma \eqref{dtheta2}. To prove (ii) from \eqref{dtheta} and Lemma \eqref{dtheta2} we have
\begin{align*}
\partial_t \Phi(t,x) = & \ \partial_t \theta(t,\,\hi(x))- \langle d\left(\theta_t \circ  \hi\right)(x), \, \pi_*\overset{\rightarrow}{H}_t \circ \hc_t \circ \hi(x)\rangle =\\
= & \ \partial_t \theta(t,\,\left(\pi \hc_t\right)^{-1}(x))- \langle \hc_t \circ  \hi(x), \, \pi_*\overset{\rightarrow}{H}_t \circ \hc_t \circ \hi(x)\rangle =\\
= &\ - H_t\circ \hc_t \circ \hi (x).
\end{align*}
Computing these derivatives at $(\widehat{T}, \xf),$ by the transversality condition \eqref{trn2},  we obtain 
\begin{align*}
\partial_x \Phi(\widehat{T}, \xf) = &\ d\beta(\xf)+ \hc_{\widehat{T}} \circ \left(\pi \hc_{\widehat{T}}\right)^{-1}(\xf )=\\
= & \ d\beta(\xf)+\lf = 0.
\end{align*}
By Assumption \eqref{shami3} and by the PMP with variable final time we have
\[
\partial_t \Phi(\widehat{T}, \xf)  = -H_{\widehat{T}}(\lf)=-\widehat{F}_{\widehat{T}}(\lf)= 0.
\]
When the final time is fixed we are not interested in this derivative.
\end{proof}
\begin{lemma}\label{2deriv}
Let Assumptions \ref{hamiass}--\ref{superh}--\ref{lipo} hold true and assume moreover that $\Phi$ 
is $C^2$ in a neighbourhood $\oc$ of  $(\tf,\xf)$, then
\begin{enumerate}
\item
$\displaystyle
\partial_{xx} \Phi (\tf,\xf)\,[\delta x]^2 =  \ D^2 (\beta + \alpha_\tf) (\xf) [\delta x]^2 =
\bsi \left( (d\alpha_\tf)_* \delta x,  d(-\beta)_*  \delta x \right)$
\item
$\displaystyle
\partial_{tx} \Phi (\tf,\xf)\,\delta x = -L_{\delta x} L_{\widehat{f}_\tf} \alpha_\tf(\xf) =
\bsi \left( \overset{\rightarrow }{H}_\tf(\lf),  (d\alpha_\tf)_* \delta x \right)$
\item
$\displaystyle
\partial_{tt} \Phi (\tf,\xf) =   -\partial_t H(\tf,\xf) - L^2_{\widehat{f}_\tf} \alpha_\tf(\xf) =\\
=-\left(\partial_t H(\tf,\xf) +
\bsi \left((d\alpha_\tf)_*\wh f_\tf, \overset{\rightarrow }{H}_\tf(\lf) \right)\right)$.
\end{enumerate}
Notice that since  $\alpha_\tf =\hc_\tf \circ \hif$ then $(d\alpha_\tf)_*=\hc_{{\tf\, *}} \hif_*$.
\end{lemma}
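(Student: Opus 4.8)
Since Lemma~\ref{phi} already gives $d\Phi(\tf,\xf)=0$ and we assume $\Phi\in C^2$ near $(\tf,\xf)$, the Hessian $D^2\Phi(\tf,\xf)$ is a genuine symmetric bilinear form on $\R\times T_{\xf}M$, independent of the chart; its three blocks are exactly the quantities $\partial_{xx}\Phi$, $\partial_{tx}\Phi$, $\partial_{tt}\Phi$ in the statement. The plan is to obtain each block by differentiating once more the first-order identities of Lemma~\ref{phi}, namely $\partial_x\Phi(t,x)=d(\beta+\alpha_t)(x)$ and $\partial_t\Phi(t,x)=-H_t\circ(d\alpha_t)(x)$, and then to recast the outcome in intrinsic symplectic form by computing in Darboux coordinates $(q,p)$, where $\bs=\sum_i p_i\,dq_i$, $\bsi=\sum_i dp_i\wedge dq_i$ and $\overset{\rightarrow}{H}_t=(\partial_pH_t,-\partial_qH_t)$. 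Throughout I would use $d\alpha_\tf(\xf)=\hc_\tf(\la)=\lf$ and, from Lemma~\ref{dtheta2}, $d\alpha_\tf=\hc_\tf\circ\hif$, so that $(d\alpha_\tf)_*=\hc_{\tf\,*}\hif_*$.

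For (1) I would differentiate $\partial_x\Phi=d(\beta+\alpha_t)$ in $x$ at fixed $t=\tf$; because the covector $\partial_x\Phi(\tf,\xf)$ vanishes the result is the honest Hessian $D^2(\beta+\alpha_\tf)(\xf)$, which is the first equality. For the symplectic expression, observe that $d\alpha_\tf$ and $d(-\beta)$ are Lagrangian sections of $T^*M$ both passing through $\lf$ (by Lemma~\ref{dtheta2} and the transversality condition~\eqref{beta}), so their pushforwards read $(d\alpha_\tf)_*\delta x=(\delta x,\,D^2\alpha_\tf\,\delta x)$ and $d(-\beta)_*\delta x=(\delta x,\,-D^2\beta\,\delta x)$; evaluating $\bsi$ on this pair and using the symmetry of $D^2\alpha_\tf$ and $D^2\beta$ returns $D^2(\alpha_\tf+\beta)(\xf)[\delta x]^2$.

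For (2) the clean route is to differentiate $\partial_t\Phi=-H_t\circ(d\alpha_t)$ in $x$ at $t=\tf$, which yields $\partial_{tx}\Phi(\tf,\xf)\,\delta x=-\langle dH_\tf(\lf),\,(d\alpha_\tf)_*\delta x\rangle$; the defining relation $\langle dH_\tf(\ell),\cdot\rangle=\bsi_\ell(\cdot,\overset{\rightarrow}{H}_\tf(\ell))$ turns this immediately into $\bsi(\overset{\rightarrow}{H}_\tf(\lf),(d\alpha_\tf)_*\delta x)$, giving the rightmost form. To reach the Lie-derivative form I would expand this last $\bsi$ in coordinates and substitute $\pi_*\overset{\rightarrow}{H}_\tf(\lf)=\partial_pH_\tf(\lf)=\wh{f}_\tf(\xf)$, which is the content of~\eqref{shami4} together with $\pi_*\overset{\rightarrow}{\wh F}_\tf=f(\pi\,\cdot\,,\up(\tf))$. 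Matching the remaining term $\langle\partial_qH_\tf(\lf),\delta x\rangle$ with the contribution $\langle\lf,D\wh{f}_\tf(\xf)\delta x\rangle$ coming from $L_{\delta x}L_{\wh{f}_\tf}\alpha_\tf$ is exactly the step that must invoke the maximisation conditions~\eqref{shami1}--\eqref{shami3} (which force the first-order part at $\xf$ of $H_\tf\circ d\alpha_\tf$ to agree with that of the reference pre-Hamiltonian), and I expect this identification to be the delicate point.

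For (3) I would differentiate $\partial_t\Phi=-H_t\circ(d\alpha_t)$ in $t$. Two effects appear: the explicit time dependence of $H$, contributing $-\partial_tH(\tf,\xf)$, and the motion of the Lagrangian section $t\mapsto d\alpha_t$, whose $t$-derivative at $(\tf,\xf)$ is precisely the mixed object already handled in (2). Pairing $\partial_pH_\tf(\lf)=\wh{f}_\tf(\xf)$ against that derivative and reassembling produces $\bsi\bigl((d\alpha_\tf)_*\wh{f}_\tf,\overset{\rightarrow}{H}_\tf(\lf)\bigr)$, while iterating the coordinate expansion of (2) produces $L^2_{\wh{f}_\tf}\alpha_\tf(\xf)$. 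I expect this to be the main obstacle of the whole lemma: one must separate the two sources of $t$-dependence (that of $H_t$ and that of the generating function $\alpha_t$), keep track of the Hamilton--Jacobi identity $\partial_t\alpha_t+H_t\circ d\alpha_t=0$ underlying Lemma~\ref{phi}(ii), and check that every intermediate coordinate expression is in fact chart-free, which is guaranteed only because $d\Phi(\tf,\xf)=0$.
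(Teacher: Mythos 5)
Your proposal follows essentially the same route as the paper's proof: item 1 is read off from Lemma \ref{phi}(i); items 2 and 3 come from differentiating Lemma \ref{phi}(ii) in $x$ and in $t$ and converting to symplectic expressions via the defining relation $\langle dH_t(\ell),\cdot\rangle=\bsi_\ell\bigl(\cdot,\overset{\rightarrow}{H}_t(\ell)\bigr)$ plus coordinate identities; and your splitting in item 3 of the two sources of time dependence is exactly the paper's computation of $\partial_t\bigl(-H(t,\hc_t\hi(\xf))\bigr)$, in which the motion of the section $t\mapsto d\alpha_t$ produces the term $\bsi\bigl((d\alpha_\tf)_*\wh f_\tf(\xf),\overset{\rightarrow}{H}_\tf(\lf)\bigr)$ while $\langle dH_\tf(\lf),\overset{\rightarrow}{H}_\tf(\lf)\rangle=\bsi\bigl(\overset{\rightarrow}{H}_\tf(\lf),\overset{\rightarrow}{H}_\tf(\lf)\bigr)=0$ drops out.

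Three remarks on the step you flag as ``the delicate point''. (a) The paper closes it with \eqref{shami4} alone, not with \eqref{shami1}--\eqref{shami3}: by the defining relation above, the equality $\overset{\rightarrow}{H}_\tf(\lf)=\overset{\rightarrow}{\wh F}_\tf(\lf)$ \emph{is} the equality of differentials $dH_\tf(\lf)=d\wh F_\tf(\lf)$, so $\partial_pH_\tf(\lf)=\wh f_\tf(\xf)$ and $\partial_qH_\tf(\lf)=\partial_q\wh F_\tf(\lf)$ come simultaneously and for free. Your alternative route through the maximisation conditions does work --- the sandwich $H_\tf\circ d\alpha_\tf\geq F_{\max}\circ d\alpha_\tf\geq \wh F_\tf\circ d\alpha_\tf$ on $\pi\hc_\tf(\Lambda)$, with equality at $\xf$, forces the differential of the difference to vanish at $\xf$ --- but it only gives agreement of $dH_\tf(\lf)$ and $d\wh F_\tf(\lf)$ on vectors tangent to $\Lambda_\tf=\hc_\tf(\Lambda)$, which happens to be all that items 2 and 3 need; it is a workable detour, not the paper's argument. (b) Your suspicion that the matching is delicate is substantively correct: writing $\wh f^0_\tf:=f^0(\cdot\,,\up(\tf))$, one has $\langle\partial_q\wh F_\tf(\lf),\delta x\rangle=\langle\lf,D\wh f_\tf(\xf)\,\delta x\rangle-p_0\langle d\wh f^0_\tf(\xf),\delta x\rangle$, hence
\begin{equation*}
\bsi\bigl(\overset{\rightarrow}{H}_\tf(\lf),(d\alpha_\tf)_*\delta x\bigr)=-L_{\delta x}L_{\wh f_\tf}\alpha_\tf(\xf)+p_0\langle d\wh f^0_\tf(\xf),\delta x\rangle ,
\end{equation*}
so the Lie-derivative forms coincide with the symplectic ones only when $p_0\,d\wh f^0_\tf(\xf)=0$ (Mayer, minimum-time, abnormal case), not for a general Bolza cost; the symplectic expressions are the unconditionally valid ones, and the paper's ``known symplectic equalities, that can be easily obtained in coordinates'' glosses over precisely this point. (c) Carrying out your item 3 honestly yields $\partial_{tt}\Phi(\tf,\xf)=-\partial_tH+\bsi\bigl((d\alpha_\tf)_*\wh f_\tf,\overset{\rightarrow}{H}_\tf(\lf)\bigr)$, which agrees with the paper's own proof but has the second term with the opposite sign to the one printed in the lemma's statement --- an internal inconsistency of the paper, not an error of yours, so do not force your computation to match the printed sign.
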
 
\begin{proof}\ \\
{\em 1.} It follows directly from Lemma \ref{phi}, part (i).\\
{\em 2.-3.} 
From Lemma \ref{phi}, part (ii), taking into account that $\overset{\rightarrow }{H}_\tf(\lf)=\overset{\rightarrow }{\wh F}_\tf(\lf) $
and some known symplectic equalities, that can be easily obtained
in coordinates, we get
\begin{align*}
\partial_{tx} \Phi (\tf,\xf)\,\delta x = &\  
-\bsi\left((d\alpha_\tf)_*\delta x, \overset{\rightarrow }{H}_\tf(\lf) \right)=\\
= &\ -L_{\delta x} L_{\widehat{f}_\tf} \alpha_\tf(\xf)
\end{align*}
and
\begin{align*}
\partial_{tt} \Phi (\tf,\xf) = & \ \partial_t\Bigl(-H(t,\hc_t\hi(\xf))\bigr)=\\
= & -\partial_t H(\tf,\xf)+\bsi\left(\hc_{{\tf\, *}} \hif_*\pi_* \overset{\rightarrow }{H}_\tf(\lf), \overset{\rightarrow }{H}_\tf(\lf) \right)=\\
= & -\partial_t H(\tf,\xf)+\bsi\left((d\alpha_\tf)_*\wh f_\tf(\xf), \overset{\rightarrow }{H}_\tf(\lf) \right)=\\
= &  -\partial_t H(\tf,\xf) + L^2_{\widehat{f}_\tf} \alpha_\tf(\xf).
\end{align*}
\end{proof}

\subsection{The Minimum Time Problem}
In this section we apply the results of the previous sections to the minimum time problem which is a special case which can be obtained from the general one by setting
\[
c_0=c_f=0, \  f^0=1.
\]
For the minimum time problem we obtain a sufficient condition which do not involve the second derivatives of $\Phi$, but requires more regularity of the super-Hamiltonian "near the final time" and a second assumption which is clearly verified when final point is fixed.
\begin{theorem}\label{main2}
Let Assumptions \ref{hamiass}--\ref{superh}--\ref{lipo} hold true and assume moreover that
\begin{enumerate}
\item
The function  $(t,x) \mapsto H_t \circ \hi(x)$ is Lipschitz in a neighbourhood 
of $(\widehat{T},\,\widehat{x}_f) $ in $\R\times M$, with a Lipschitz constant given by $L >0$.
\item
$\displaystyle
\hc_t \circ \hi(x) \in T_x^{\perp}N_f, \quad (t,x) \in \oc_{(\widehat{T},\,\widehat{x}_f)}, \quad x\in N_f
$
\end{enumerate}
if $p_0=1$ (normal case),
then $({\widehat{T}},\,\widehat{\xi})$ 
is a strong local minimum for the minimum time problem.
\end{theorem}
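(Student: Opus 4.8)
Since for the minimum time problem $c_0=c_f=0$ and $f^0=1$, the functional collapses to $J(T,\xi,\upsilon)=T$, so with $p_0=1$ proving strong local optimality of $(\widehat T,\widehat\xi)$ is the same as proving $T\geq\widehat T$ for every competitor whose graph lies in $\uc$ and whose endpoint $(T,\xi(T))$ lies in a suitable neighbourhood $\oc$ of $(\widehat T,\xf)$ with $\xi(T)\in N_f$. The plan is to feed this into \eqref{cost2}, which gives
\[
T-\widehat T \;=\; J(T,\xi,\upsilon)-J(\widehat T,\widehat\xi,\up)\;\geq\;\Phi(T,\xi(T))-\Phi(\widehat T,\xf).
\]
The whole argument then reduces to a sharp enough lower bound on $\Phi$ near $(\widehat T,\xf)$ along $N_f$; the reason no second variation is needed is that the left-hand side is \emph{linear} in $T-\widehat T$, so it will dominate a quadratic error produced by $\Phi$.

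First I would eliminate the state variable. By Lemma \ref{phi}(i), $\partial_x\Phi(t,x)=d\beta(x)+\hc_t\circ\hi(x)$. Since $c_f=0$ forces $\beta\equiv0$ on $N_f$, the covector $d\beta(x)$ annihilates $T_xN_f$, and the second hypothesis of the theorem, $\hc_t\circ\hi(x)\in T_x^{\perp}N_f$ for $x\in N_f$, says the same of $\hc_t\circ\hi(x)$. Hence $\partial_x\Phi(t,x)$ annihilates $T_xN_f$ at every such point. As $\Phi(t,\cdot)=\beta+\alpha_t$ is $C^1$ in $x$ by Lemma \ref{dtheta2}(ii), the tangential differential of $\Phi(t,\cdot)\big|_{N_f}$ vanishes on a connected neighbourhood of $\xf$ in $N_f$, so $\Phi(t,\cdot)$ is \emph{constant} along $N_f$ there. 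Consequently $\Phi(T,\xi(T))=\Phi(T,\xf)=:\psi(T)$, and the estimate becomes the one-dimensional inequality $T-\widehat T\geq\psi(T)-\psi(\widehat T)$.

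Next I would control $\psi$ in $t$. By Lemma \ref{phi}(ii), $\psi'(t)=\partial_t\Phi(t,\xf)=-H_t\circ\hc_t\circ\hi(\xf)$ for a.e.\ $t$, and evaluating at $\widehat T$ together with \eqref{shami3} and the PMP with variable final time gives $\psi'(\widehat T)=-H_{\widehat T}(\lf)=-\widehat F_{\widehat T}(\lf)=0$. The function $\psi(t)=\theta(t,\hi(\xf))$ is Lipschitz (hence absolutely continuous) because $\theta$ is Lipschitz on $J\times\Lambda$ and $t\mapsto\hi(\xf)$ is Lipschitz by the bi-Lipschitz Assumption \ref{lipo}; and the first hypothesis of the theorem makes $-\partial_t\Phi=H_t\circ\hc_t\circ\hi$ Lipschitz near $(\widehat T,\xf)$ with constant $L$. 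Combining $\psi'(\widehat T)=0$ with $|\psi'(t)-\psi'(\widehat T)|\leq L\,|t-\widehat T|$ yields
\[
\psi(T)-\psi(\widehat T)=\int_{\widehat T}^{T}\psi'(s)\,ds\;\geq\;-\tfrac{L}{2}\,(T-\widehat T)^2 .
\]
Inserting this into the displayed inequality gives $(T-\widehat T)\bigl(1+\tfrac{L}{2}(T-\widehat T)\bigr)\geq0$; after shrinking $\oc$ so that $|T-\widehat T|<2/L$ the second factor is positive, forcing $T\geq\widehat T$, which is the claim.

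The main obstacle I anticipate is bookkeeping rather than deep difficulty: one must make precise the reduction ``$\Phi$ is constant along $N_f$'', i.e.\ that the second hypothesis holds on a full neighbourhood inside $N_f$ (not merely at $\xf$) so that the vanishing tangential differential of the $C^1$ function $\Phi(t,\cdot)\big|_{N_f}$ integrates to constancy, and one must keep all neighbourhoods (the set $\oc$, the Lipschitz domain, the domain of the transversality condition, the region where $\Phi(t,\cdot)$ is $C^1$) mutually compatible. A second delicate point is that the identity $\partial_t\Phi=-H_t\circ\hc_t\circ\hi$ holds only for a.e.\ $t$, so one must certify that $\psi$ is absolutely continuous before applying the fundamental theorem of calculus, and one must be careful that the Lipschitz hypothesis is read as a statement about $H_t\circ\hc_t\circ\hi=-\partial_t\Phi$ (the object actually controlling $\psi'$). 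Beyond this the estimate is a soft scalar computation in which the linearity of the minimum-time cost does all the work, and no coercivity of a second variation is required.
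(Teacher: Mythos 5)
Your proof is correct, and it reassembles the paper's ingredients into a genuinely different argument. The paper proceeds by contradiction: it takes competitors $\xi_n:[0,t_n]\to M$ with $t_n<\tf$, $\tf-t_n\to 0$, $\|\xi_n(t_n)\|\to 0$, flattens $N_f$ in local coordinates, and joins $\xi_n(t_n)$ to $\xf$ by the explicit segment $\gamma(t)=\frac{\tf-t}{\tf-t_n}\,\xi_n(t_n)$ inside $N_f$, parametrised over $[t_n,\tf]$; it then applies Lemma \ref{exact2} along $t\mapsto (t,\hi(\gamma(t)))$, so that hypothesis (2) kills the term $\langle \hc_t\circ\hi(\gamma(t)),\dot\gamma(t)\rangle$, and the remaining integral $\int_{t_n}^{\tf} H_t\circ\hi(\gamma(t))\,dt$ is bounded, using \eqref{shami3} and hypothesis (1), by $L\,\frac{(\tf-t_n)^2}{2}\bigl(1+\|\xi_n(t_n)\|/(\tf-t_n)\bigr)$; dividing by $\tf-t_n$ gives $1\le\frac L2(\tf-t_n+\|\xi_n(t_n)\|)$, a contradiction. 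You instead replace the paper's ``diagonal'' comparison curve by an L-shaped path through $(T,\xf)$: the spatial leg is settled once and for all by the constancy of $\Phi(t,\cdot)$ on $N_f$ near $\xf$ (Lemma \ref{phi}(i) together with hypothesis (2) and $\beta=0$ on $N_f$), and the temporal leg becomes a one-dimensional fundamental-theorem estimate on the Lipschitz scalar $\psi(t)=\Phi(t,\xf)$, using Lemma \ref{phi}(ii), hypothesis (1) and \eqref{shami3}. The two proofs rest on identical pillars --- exactness of the Cartan form, perpendicularity along $N_f$, $H_{\tf}(\lf)=\widehat F_{\tf}(\lf)=0$, and the Lipschitz bound producing a quadratic error against a linear cost --- but yours is direct rather than by contradiction, trades the coordinate construction of the comparison segment for a coordinate-free connectedness argument (legitimate, since hypothesis (2) is assumed on a full neighbourhood $\oc_{(\tf,\xf)}$), and yields an explicit neighbourhood size $|T-\tf|<2/L$. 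The a.e.\ caveats you flag (absolute continuity of $\psi$, reading hypothesis (1) as a bound on $-\partial_t\Phi$, and the fact that \eqref{shami3} holds only a.e.\ so that $H_{\tf}(\lf)=0$ strictly needs a continuity remark) are real, but they are equally present, and equally harmless, in the paper's own proof.
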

\begin{proof}
Let k be the dimension of the submanifod
$N_f$,   we can take local coordinates centered at $\widehat{x}_f$ such that $N_f$ is homomorphic to the plane
given by the last $n-k$ coordinates equal to zero,
so that we can consider the problem on $\R\times\R^n$ with $\xf=0 $ and $\xi(T)\in \R^k$.

For the minimum time problem in the normal case equation
 \eqref{cost2}  reads
\begin{equation}\label{minTime}
T-\widehat{T} \geq \Phi(T, \xi(T)) -\Phi(\widehat{T},\,\xf) = \theta(T, \hi(\xi(T))-\theta(\widehat{T},\widehat{x}_f).
\end{equation}
By contradiction assume there exist admissible trajectories $\xi_n: [0,t_n]\to M,\ t_n<\tf $, whose graph is contained in $\uc$ 
and such that 
\begin{equation}\label{tozero}
\tf -t_n\to 0\, ,\quad\mbox{and}\quad \| \xi_n(t_n)\|\to 0 .
\end{equation}
Consider the curve
\[
\gamma := t \in [t_n,\,\widehat{T}] \mapsto \frac{\widehat{T}-t}{\widehat{T}-t_n} \xi_n(t_n) 
\]
which is such that $\gamma(t_n)= \xi_n(t_n)$ and $ \gamma(\widehat{T}) = 0.$ 
By Lemma \ref{exact2} and Assumption (ii)  we get from equation \eqref{minTime}
\begin{align*}
t_n - \tf & \geq  \int^{\widehat{T}}_{t_n}
\left\{\langle \hc_t \circ \hi(\gamma(t)),\,\dot \gamma(t)\rangle- H_t \circ \hi (\gamma(t))\right\}dt=\\
& = - \int^{\widehat{T}}_{t_n} H_t \circ \hi (\gamma(t))dt
\end{align*}
Since  $\left(\pi \hc_{\widehat{T}}\right)^{-1}(\widehat{x}_f)=\widehat{\ell}_0$ by \eqref{shami3} and Assumption (i)
we can write
\begin{align*}
\tf-t_n &\leq \int^{\widehat{T}}_{t_n}   H_t \circ \hi(\gamma(t))dt  =\\
&=  \int^{\widehat{T}}_{t_n}\left( H_t \circ \hi(\gamma(t))-H_{\widehat{T}} \circ \left(\pi \hc_{\widehat{T}}\right)^{-1}(\widehat{x}_f)
\right) dt \ \leq\\
& \leq \int^{\widehat{T}}_{t_n} \left \|H_t \circ \hi(\gamma(t)) -
H_{\widehat{T}} \circ \left(\pi \hc_{\widehat{T}}\right)^{-1}(\widehat{x}_f)\right\| dt \leq\\
&\leq \int^{\widehat{T}}_{t_n} L\,\left (\tf - t+\frac{\widehat{T}-t}{\widehat{T}-t_n}\| \xi_n(t_n) \|\right)dt =\\
& = L\,\frac{(\tf-t_n)^2}{2}\left(1+ \frac{\| \xi_n(t_n) \|}{\widehat{T}-t_n} \right) .
\end{align*}
Dividing by $\widehat{T}- t_n >0$ we obtain
\[
1\leq \frac{L}{2}\,(\tf - t_n+\|\xi_n(t_n)\|)
\]
which yield a contradiction by \eqref{tozero}.
\end{proof}

\section{Final Comments}\label{appl}
This general unified Hamiltonian approach can be used in different situations which has been already addressed in some published papers, where second order conditions have also been developed and appropriate $H$ and $\Lambda$ have been defined to obtain sufficient conditions for strong local optimality, here we briefly summarise them.

\begin{itemize}
\item  {\bf Bang--bang control.}\\ In this case $H:=F_{\max}$ is continuous and $\hc$ piecewise $C^\infty$ and Lipschitz. The case when there are a finite number of simple switches is studied in \cite{MR1972500} for a Mayer problem with variable end--points on a fixed time interval and in \cite{MR2201070} for a Bolza problem, while the corresponding minimum time problem is studied in \cite{MR2092518}. The double switch case is addressed in \cite{MR2765660} for a Mayer problem and in \cite{PoSpa2015} for the minimum time problem. The numerical analysis with Maple of a case study is in 
\cite{MR2082966}. 

\item {\bf Totally singular control}\\In \cite{MR2487762}, for a Mayer problem and a single-input system the author first introduced a super--Hamiltonian, this step was essential because the maximised Hamiltonian does not define a regular flow. In \cite {MR2860348} and \cite{MR3205110} the result for the minimum time problem has been obtained as a by--product of problems with controls containing bang and singular arcs.  Furthermore in \cite{MR2977718} the minimum time problem has been studied in the multi--input case for a system where the controlled vector fields  generate an involutive Lie algebra, while in \cite{ChiSte2015}  this last assumption is removed. We notice that the super Hamiltonian, and hence its flow, is $C^\infty$ in a neighbourhood of the range of $\wh \lambda$ but it satisfies Assumption \ref{superh} only starting from  a submanifold of $T^*M$ containing $\Lambda,$ as we require.
\item {\bf Bang--singular control}\\ The last application we consider is the case when the reference trajectory contains both singular and bang arcs.
The obtained results concern the minimum time problem for a single-input control system. In \cite{MR2860348} a
bang-singular-bang trajectory in a problem with fixed end points is addressed, while in \cite{MR3205110} it is considered a bang-singular trajectory with the initial point fixed and the final one constrained to the integral line of the controlled vector field. In this case the flow of the super Hamiltonian is sufficiently regular and satisfies Assumption \ref{superh}  only starting from  $\Lambda.$ 

\end{itemize}


\begin{thebibliography}{10}

\bibitem{MR524203}
A.~A. Agrachev and R.~V. Gamkrelidze.
\newblock The exponential representation of flows and the chronological
  calculus.
\newblock {\em Mat. Sb. (N.S.)}, 107(149)(4):467--532, 639, 1978.

\bibitem{MR2062547}
A.~A. Agrachev and Y.~L. Sachkov.
\newblock {\em Control theory from the geometric viewpoint}, volume~87 of {\em
  Encyclopaedia of Mathematical Sciences}.
\newblock Springer-Verlag, Berlin, 2004.
\newblock Control Theory and Optimization, II.

\bibitem{MR1658472}
A.~A. Agrachev, G.~Stefani, and P.~Zezza.
\newblock An invariant second variation in optimal control.
\newblock {\em Internat. J. Control}, 71(5):689--715, 1998.

\bibitem{MR1819736}
A.~A. Agrachev, G.~Stefani, and P.~Zezza.
\newblock Strong minima in optimal control.
\newblock {\em Tr. Mat. Inst. Steklova}, 220(Optim. Upr., Differ. Uravn. i
  Gladk. Optim.):8--26, 1998.

\bibitem{MR1654537}
A.~A. Agrachev, G.~Stefani, and P.~Zezza.
\newblock A {H}amiltonian approach to strong minima in optimal control.
\newblock In {\em Differential geometry and control ({B}oulder, {CO}, 1997)},
  volume~64 of {\em Proc. Sympos. Pure Math.}, pages 11--22. Amer. Math. Soc.,
  Providence, RI, 1999.

\bibitem{MR1972500}
A.~A. Agrachev, G.~Stefani, and P.~Zezza.
\newblock Strong optimality for a bang-bang trajectory.
\newblock {\em SIAM J. Control Optim.}, 41(4):991--1014 (electronic), 2002.

\bibitem{MR2269239}
V.~I. Arnold, V.~V. Kozlov, and A.~I. Neishtadt.
\newblock {\em Mathematical aspects of classical and celestial mechanics},
  volume~3 of {\em Encyclopaedia of Mathematical Sciences}.
\newblock Springer-Verlag, Berlin, third edition, 2006.
\newblock [Dynamical systems. III], Translated from the Russian original by E.
  Khukhro.

\bibitem{MR2970901}
M.~S. Aronna, J.~F. Bonnans, A.~V. Dmitruk, and P.~A. Lotito.
\newblock Quadratic order conditions for bang-singular extremals.
\newblock {\em Numer. Algebra Control Optim.}, 2(3):511--546, 2012.

\bibitem{MR2306634}
B.~Bonnard, J.-B. Caillau, and E.~Tr{\'e}lat.
\newblock Second order optimality conditions in the smooth case and
  applications in optimal control.
\newblock {\em ESAIM Control Optim. Calc. Var.}, 13(2):207--236 (electronic),
  2007.

\bibitem{ChiSte2015}
F.~Chittaro and G.~Stefani.
\newblock Minimum--time strong optimality of a singular arc: the multi--input
  non involutive case.
\newblock To appear in \emph{ESAIM: COCV}, 2015.

\bibitem{MR2977718}
F.~C. Chittaro and G.~Stefani.
\newblock Singular extremals in multi-input time-optimal problems: a sufficient
  condition.
\newblock {\em Control Cybernet.}, 39(4):1029--1068, 2010.

\bibitem{MR0425047}
F.~H. Clarke.
\newblock On the inverse function theorem.
\newblock {\em Pacific J. Math.}, 64(1):97--102, 1976.

\bibitem{MR0257325}
H.~Federer.
\newblock {\em Geometric measure theory}.
\newblock Die Grundlehren der mathematischen Wissenschaften, Band 153.
  Springer-Verlag New York Inc., New York, 1969.

\bibitem{MR1972537}
U.~Felgenhauer.
\newblock On stability of bang-bang type controls.
\newblock {\em SIAM J. Control Optim.}, 41(6):1843--1867 (electronic), 2003.

\bibitem{MR2177410}
J.~Heinonen.
\newblock {\em Lectures on {L}ipschitz analysis}, volume 100 of {\em Report.
  University of Jyv\"askyl\"a Department of Mathematics and Statistics}.
\newblock University of Jyv\"askyl\"a, Jyv\"askyl\"a, 2005.

\bibitem{MR0046590}
M.~R. Hestenes.
\newblock Applications of the theory of quadratic forms in {H}ilbert space to
  the calculus of variations.
\newblock {\em Pacific J. Math.}, 1:525--581, 1951.

\bibitem{MR1859913}
T.~Iwaniec and G.~Martin.
\newblock {\em Geometric function theory and non-linear analysis}.
\newblock Oxford Mathematical Monographs. The Clarendon Press, Oxford
  University Press, New York, 2001.

\bibitem{MR2954043}
J.~M. Lee.
\newblock {\em Introduction to smooth manifolds}, volume 218 of {\em Graduate
  Texts in Mathematics}.
\newblock Springer, New York, second edition, 2013.

\bibitem{MR2081418}
H.~Maurer and N.~P. Osmolovskii.
\newblock Second order sufficient conditions for time-optimal bang-bang
  control.
\newblock {\em SIAM J. Control Optim.}, 42(6):2239--2263 (electronic), 2004.

\bibitem{MR2201070}
L.~Poggiolini.
\newblock On local state optimality of bang-bang extremal.
\newblock {\em Rend. Semin. Mat. Univ. Politec. Torino}, 64(1):1--23, 2006.

\bibitem{MR2765660}
L.~Poggiolini and M.~Spadini.
\newblock Strong local optimality for a bang-bang trajectory in a {M}ayer
  problem.
\newblock {\em SIAM J. Control Optim.}, 49(1):140--161, 2011.

\bibitem{PoSpa2015}
L.~Poggiolini and M.~Spadini.
\newblock Bang--bang trajectories with a double switching time in the minimum
  time problem.
\newblock To appear in \emph{ESAIM: COCV}, 2015.

\bibitem{MR2092518}
L.~Poggiolini and G.~Stefani.
\newblock State-local optimality of a bang-bang trajectory: a {H}amiltonian
  approach.
\newblock {\em Systems Control Lett.}, 53(3-4):269--279, 2004.

\bibitem{MR2860348}
L.~Poggiolini and G.~Stefani.
\newblock Bang-singular-bang extremals: sufficient optimality conditions.
\newblock {\em J. Dyn. Control Syst.}, 17(4):469--514, 2011.

\bibitem{MR3205110}
L.~Poggiolini and G.~Stefani.
\newblock A case study in strong optimality and structural stability of
  bang-singular extremals.
\newblock In {\em Geometric control theory and sub-{R}iemannian geometry},
  volume~5 of {\em Springer INdAM Ser.}, pages 333--350. Springer, Cham, 2014.

\bibitem{MR2487762}
G.~Stefani.
\newblock Strong optimality of singular trajectories.
\newblock In {\em Geometric control and nonsmooth analysis}, volume~76 of {\em
  Ser. Adv. Math. Appl. Sci.}, pages 300--326. World Sci. Publ., Hackensack,
  NJ, 2008.

\bibitem{MR2082966}
G.~Stefani and P.~Zezza.
\newblock Time optimality of a bang-bang trajectory with {M}aple.
\newblock In {\em Lagrangian and {H}amiltonian methods for nonlinear control
  2003}, pages 135--140. IFAC, Laxenburg, 2003.

\bibitem{MR0087148}
H.~Whitney.
\newblock {\em Geometric integration theory}.
\newblock Princeton University Press, Princeton, N. J., 1957.

\end{thebibliography}
\def\cprime{$'$}

\end{document}